\renewcommand{\theenumi}{{\roman{enumi}}}
\def\@settitle{\begin{center}%
  \baselineskip14\p@\relax
  \normalfont\LARGE\bfseries
  \@title
  \ifx\@subtitle\@empty\else
     \\[1ex] 
     \normalsize\mdseries\@subtitle
  \fi
 \ifx\@didication\@empty\else
     \\[2ex] 
     \large\mdseries\it\@dedication
  \fi
  \end{center}%
}
\def\subtitle#1{\gdef\@subtitle{#1}}
\def\@subtitle{}
\def\dedication#1{\gdef\@dedication{#1}}
\def\@dedication{}
\def\vmargin@#1#2#3{
\setbox0=\hbox{#3}%
\rule[#1]{0pt}{\ht0}%
\lower\dp0\hbox{\rule[-#2]{0pt}{\dp0}}%
\box0%
}
\def\vmargin#1#2#3{
\mathchoice
{\vmargin@{#1}{#2}{$\displaystyle #3$}}
{\vmargin@{#1}{#2}{$\textstyle #3$}}
{\vmargin@{#1}{#2}{$\scriptstyle #3$}}
{\vmargin@{#1}{#2}{$\scriptscriptstyle #3$}}
}
\renewcommand{\section}{\@startsection
{section}{1}{0mm}{5mm}{2mm}{\raggedright\bfseries}}
\newtheorem{Lemma}[equation]{Lemma}
\newtheorem{Corollary}[equation]{Corollary}
\newtheorem{Proposition}[equation]{Proposition}
\theoremstyle{definition}
\newtheorem{Definition}[equation]{Definition}
\newtheorem{Construction}[equation]{Construction}
\newtheorem{Example}[equation]{Example}
\theoremstyle{remark}
\newtheorem{Note}[equation]{Note}
\begin{document}
\newcommand{\ig}[1]{\includegraphics[width=0.25\hsize]{#1}}
\newcommand{\igg}[1]{\includegraphics[width=0.28\hsize]{#1}}
\newcommand{\igl}[1]{\includegraphics[width=0.9\hsize]{#1}}
\newcommand{\igs}[1]{\includegraphics[width=0.18\hsize]{#1}}
\newcommand{\cSap}[1]{{\cS_{\rm ap}({#1})}}
\def\sI{{\mathsf I}}
\def\sJ{{\mathsf J}}
\def\sM{{\mathsf M}}
\def\sS{{\mathsf S}}
\def\eps{{\varepsilon}}
\def\N{{\mathbb N}}
\def\C{{\mathbb C}}
\def\Z{{\mathbb Z}}
\def\R{{\mathbb R}}
\def\Q{{\mathbb Q}}
\def\cD{{\mathcal{D}}}
\def\cM{{\mathcal{M}}}
\def\cS{{\mathcal{S}}}
\def\cH{{\mathcal{H}}}
\def\cM{{\mathcal{M}}}
\def\cR{{\mathcal{R}}}
\def\cC{{\mathcal{C}}}
\def\cT{{\mathcal{T}}}
\def\bp{{\mathbf p}}
\def\bq{{\mathbf q}}
\def\bP{{\mathbf P}}
\def\bG{{\mathbf G}}
\def\Gal{{\mathrm{Gal}}}
\def\et{\text{\'et}}
\def\ab{\mathrm{ab}}
\def\proP{{\text{pro-}p}}
\def\padic{{p\mathchar`-\mathrm{adic}}}
\def\la{\langle}
\def\ra{\rangle}
\def\scM{\mathscr{M}}
\def\lala{\la\!\la}
\def\rara{\ra\!\ra}
\def\ttx{{\mathtt{x}}}
\def\tty{{\mathtt{y}}}
\def\ttz{{\mathtt{z}}}
\def\bkappa{{\boldsymbol \kappa}}
\def\scLi{{\mathscr{L}i}}
\def\sLL{{\mathsf{L}}}
\def\GL{\mathrm{GL}}
\def\Coker{\mathrm{Coker}}
\def\area{\mathrm{area}}
\def\Ker{\mathrm{Ker}}
\def\CHplus{\underset{\mathsf{CH}}{\oplus}}
\def\check{{\clubsuit}}
\def\kaitobox#1#2#3{\fbox{\rule[#1]{0pt}{#2}\hspace{#3}}\ }
\def\vru{\,\vrule\,}
\newcommand*{\longhookrightarrow}{\ensuremath{\lhook\joinrel\relbar\joinrel\rightarrow}}
\newcommand{\hooklongrightarrow}{\lhook\joinrel\longrightarrow}
\def\nyoroto{{\rightsquigarrow}}
\newcommand{\pathto}[3]{#1\overset{#2}{\dashto} #3}
\newcommand{\pathtoD}[3]{#1\overset{#2}{-\dashto} #3}
\def\dashto{{\,\!\dasharrow\!\,}}
\def\ovec#1{\overrightarrow{#1}}
\def\isom{\,{\overset \sim \to  }\,}
\def\GT{{\widehat{GT}}}
\def\bfeta{{\boldsymbol \eta}}
\def\brho{{\boldsymbol \rho}}
\def\bomega{{\boldsymbol \omega}}
\def\sha{\scalebox{0.6}[0.8]{\rotatebox[origin=c]{-90}{$\exists$}}}
\def\upin{\scalebox{1.0}[1.0]{\rotatebox[origin=c]{90}{$\in$}}}
\def\downin{\scalebox{1.0}[1.0]{\rotatebox[origin=c]{-90}{$\in$}}}
\def\torusA{{\epsfxsize=0.7truecm\epsfbox{torus1.eps}}}
\def\torusB{{\epsfxsize=0.5truecm\epsfbox{torus2.eps}}}
\def\Conf{{\mathrm{Conf}}}
\newcommand{\tvect}[3]{%
   \ensuremath{\Bigl(\negthinspace\begin{smallmatrix}#1\\#2\\#3\end{smallmatrix}\Bigr)}}
\newcommand{\bvect}[2]{%
   \ensuremath{\Bigl(\negthinspace\begin{smallmatrix}#1\\#2\end{smallmatrix}\Bigr)}}
\newcommand{\bmatx}[4]{%
   \ensuremath{\Bigl(\negthinspace\begin{smallmatrix}#1&#2\\#3&#4\end{smallmatrix}\Bigr)}}

\def\bbS{{\mathbb S}}	%
\def\bbSap{{\mathbb S_{\mathrm{ap}}}}	%
\def\bbT{{\mathbb T}}
\def\Proj{{\mathbb P}}
\def\RP#1{{\R{\mathbb P}^{#1}}}
\def\QP#1{{\Q{\mathbb P}^{#1}}}
%
\def\diag{{\mathrm{diag}}}
\def\sK{{\mathsf K}}	%
\def\sN{{\mathsf N}}	%
\def\Mlbl{{\ttw\ttx/\tty\ttz}}
\def\Mlblc{{\ttw\ttx/\ttz\tty}}
\def\tta{{\mathtt{a}}}
\def\ttb{{\mathtt{b}}}
\def\ttc{{\mathtt{c}}}
\def\ttw{{\mathtt{w}}}
\def\ttx{{\mathtt{x}}}
\def\tty{{\mathtt{y}}}
\def\ttz{{\mathtt{z}}}
\def\SetMlbl{\{\tta\tta/\tta\ttb$, $\tta\ttb/\tta\ttb$, $\tta\ttc/\tta\ttb\}}

\renewcommand{\labelenumi}{\textrm{\theenumi.}}


\title{On generalized median triangles \\ and tracing orbits}
\author{Hiroaki Nakamura}
\address{
Department of Mathematics, 
Graduate School of Science, 
Osaka University, 
Toyonaka, Osaka 560-0043, Japan}
\email{nakamura@math.sci.osaka-u.ac.jp}

\author{Hiroyuki Ogawa}
\address{
Department of Mathematics, 
Graduate School of Science, 
Osaka University, 
Toyonaka, Osaka 560-0043, Japan}
\email{ogawa@math.sci.osaka-u.ac.jp}

\subjclass{51M15; 51N20, 12F05, 43A32}

\begin{abstract}
We study generalization of median triangles on the plane 
with two complex parameters.
By specialization of the parameters, we produce periodical motion 
of a triangle whose vertices trace each other on a common closed
orbit.
\end{abstract}

\maketitle

\markboth{H.Nakamura, H.Ogawa}
{On generalized median triangles and tracing orbits}

\vspace*{-5mm}

\tableofcontents
\footnote[0]{This is a pre-print of an article published in Results in Mathematics. The final authenticated version is available online at: https://doi.org/s00025-020-01268-3}


\vspace{-2\baselineskip}
\section{Introduction}

Given a triangle $\Delta=\Delta ABC$ on a plane, one forms its 
{\it medial} (or {\it midpoint}) {\it triangle}
$\cS(\Delta)=\Delta A'B'C'$ which, by definition, is a triangle 
obtained by joining the midpoints 
$A',B' ,C'$ of the sides $BC,CA,AB$ respectively.
The {\it median triangle}
$\cM(\Delta)=\Delta A''B''C''$ of $\Delta=\Delta ABC$ is
a triangle whose three sides are parallel to the three medians 
$AA',BB',CC'$ of $\Delta$. To position $\cM(\Delta)$, 
it is convenient to impose extra condition
that  $\cM(\Delta)$ shares its centroid with 
$\Delta$ as well as with $\cS(\Delta)$.
To fix labels of vertices of $\cM(\Delta)$, one can
set, for example, $\ovec{AA'}=\ovec{A''B''}$,
$\ovec{BB'}=\ovec{B''C''}$,
$\ovec{CC'}=\ovec{C''A''}$.
 
Arithmetic interest on median triangles can be traced back to 
Euler who found a smallest triangle made of
three integer sides and three integer medians: 
there exists
$\Delta ABC$ with
$\overline{AB}=136$, $\overline{BC}=174$, 
$\overline{CA}=170$, $\overline{AA'}=127$, 
$\overline{BB'}=131$ and
$\overline{CC'}=158$
 (cf.\,\cite{B89}).
In recent years, geometrical constructions of {\it nested triangles}
in more general senses call attentions of researchers
(e.g.,\,\cite{BC12},\cite{Nic13}). In particular,   
M.Hajja \cite{H09}
studied a generalization of the above constructions 
$\cS(\Delta)$ and $\cM(\Delta)$ by introducing
a real parameter $s\in \R$ 
to replace the midpoints of the sides 
by more general $(s:1-s)$-division points. 
Recently in \cite{NO2}, the former construction for $\cS(\Delta)$
was generalized so as to have two complex parameters 
$\Delta\mapsto \cS_{p,q}(\Delta)$
($p,q\in\C$, $pq\ne 1$). 

The primary aim of the first part of this paper is,
following the line of \cite{NO2}, 
to extend the procedure for $\cM(\Delta)$
to a collection of operations 
of the forms
$\Delta\mapsto \cM_{p,q}^{\ttw\ttx/\tty\ttz}(\Delta)$
so that the sides of $\cM_{p,q}^{\ttw\ttx/\tty\ttz}(\Delta)$
are given by vectors joining vertices of $\Delta$ and 
of $\cS_{p,q}(\Delta)$ in 18-fold ways of label correspondences
(See Definition \ref{def-pq-median} below).
After studying mutual relations of the 18-fold ways, we will find that
only three ways among them are essential.
Then, applying the finite Fourier transforms of triangles, we 
obtain operators $\cS[\eta,\eta']$ and $\cM^{\ttw\ttx/\tty\ttz}[\eta,\eta']$
which behave smoothly with the parameter $(\eta,\eta')$ running over 
the full space $\C^2$ (the former was already closely studied in \cite{NO2}).

In the second part of the present paper, we will
study `dancing' of 
triangles  $\cS[\eta,\eta'](\varDelta)$ and
$\cM^{\ttw\ttx/\tty\ttz}[\eta,\eta'](\varDelta)$
along with periodical parameters 
$(\eta(t),\eta'(t))\in\C^2$ ($t\in\R/\Z$).
In particular, we search conditions under which the three vertices of
a triangle trace one after the other in motion along a single common orbit.
Basic examples including ``choreographic three bodies dancing
on a figure eight'' will also be illustrated.

The organization of this paper reads as follows.
In \S 2, we formulate the generalized median operator $\cM_{p,q}^{\ttw\ttx/\tty\ttz}$
on triangles with two complex parameters $p,q$ ($pq\ne 1$) and
with labels $\ttw,\ttx,\tty,\ttz\in\Z/3\Z$ ($\tty\ne\ttz$), 
and illustrate their geometric features on triangles. 
In \S 3, we present how the finite Fourier transformation of triangles 
improves defects of the original parameters $(p,q)$ so as to 
introduce $\cM^{\ttw\ttx/\tty\ttz}[\eta,\eta']$ with
a new parameter system $(\eta,\eta')\in\C^2$.
In particular, $\cM^{\ttw\ttx/\tty\ttz}[\eta,\eta']$ turns out to
be expressed as the generalized cevian operator 
$\cS[\eta_0,\eta_1]$ studied in \cite{NO2}
with suitable change of variables $(\eta,\eta')\to(\eta_0,\eta_1)$
(Corollary \ref{MStran}).
In \S 4, we provide a set of symmetric identities among 
those operators $\cM^{\ttw\ttx/\tty\ttz}[\eta,\eta']$ 
with variations of labels $ \ttw\ttx/\tty\ttz$ and of parameters
$(\eta,\eta')$, and conclude the prescribed primary goal of the first part
of this paper.
A short section \S 5 is then inserted to introduce the space of
triangle shapes (moduli space of similarity classes) from the
viewpoint of finite Fourier transformation and Hajja's shape function. 
We also discuss relationship between Hajja's median operator
$\cH_s$ and a binary Ceva operator $\cC_s$ of Griffiths, B\'enyi-\'Curgus type
from our viewpoint in complex parameter $s\in\C$.
The final section \S 6 is devoted to studying tracing orbits of three bodies
and present their primary characterization in the form
$\cS[\eta(t),\eta'(t)](\Delta_0)$ with certain continuous periodic 
functions $\eta(t),\eta'(t):\R/\Z\to \C$. 
We illustrate some examples of area preserving triangle motions and 
of figure eight orbits. The latter example will be generalized to 3-braiding
motions on Lissajous curves in a separate article \cite{KNO20}.

\section{Generalized median operators}
%

Throughout this paper, we use the notations: 
$i:=\sqrt{-1}$, $\rho:=e^{2 \pi i/6}$, $\omega:=e^{2 \pi i/3}$.

We consider any triangle lies on the complex plane $\C$
and identify it with the multiset of vertices $\{a_0,a_1,a_2\}$
on $\C$.
It is useful to say that a vector $\Delta=(a_0,a_1,a_2)\in\C^3$
is a {\it triangle triple} 
representing the triangle $\{a_0,a_1,a_2\}$.
A triangle triple $\Delta=(a_0,a_1,a_2)\in\C^3$ will sometimes be 
written as
$\Delta=(a_\ttx)_{\ttx\in\Z/3\Z}$ after
the index set $\{0,1,2\}$ for coordinates being
naturally identified 
with $\Z/3\Z$, the ring of integers modulo 3.

In \cite{NO2}, for $p,q\in\C$ with $pq\ne 1$, 
we introduced an operation $\cS_{p,q}$ on the triangle triples 
defined by
\begin{equation} \label{eq1.1}
\cS_{p,q}(a_0,a_1,a_2)=(a_0',a_1',a_2'):
\begin{cases}
a_0'=\hspace{-4mm}& \ \alpha_{p,q}\, a_0 + \beta_{p,q}\, a_1 +\gamma_{p,q} \, a_2 ;\\
a_1'=\hspace{-4mm}& \ \alpha_{p,q}\, a_1 + \beta_{p,q}\, a_2 +\gamma_{p,q} \, a_0 ;\\
a_2'=\hspace{-4mm}& \ \alpha_{p,q}\, a_2 + \beta_{p,q}\, a_0 +\gamma_{p,q} \, a_1,
\end{cases}
\end{equation}
where,  
$$
\alpha_{p,q}=\frac{p(1-q)}{1-pq},
\  
\beta_{p,q}=\frac{q(1-p)}{1-pq},
\
\gamma_{p,q}=\frac{(1-p)(1-q)}{1-pq}.
$$
When $p,q$ are real numbers, $\cS_{p,q}(\Delta)$ can be obtained 
from intersection points of certain two cevian triples of $\Delta$ 
as introduced in \cite{NO03}. For convenience, 
we shall call $\cS_{p,q}$ a {\it generalized cevian operator} 
on triangles also for complex parameters $p,q$.
Since $\alpha_{p,q}+\beta_{p,q}+\gamma_{p,q}=1$, 
it is easy to see that the centroids of
$\Delta=(a_0,a_1,a_2)$ and of $\Delta':=\cS_{p,q}(\Delta)=(a_0',a_1',a_2')$
coincide and that 
\begin{equation} \label{condition0}
\sum_{k\in\Z/3\Z}
\ovec{a_{\ttw+k}a_{\ttx+k}'}=\mathbf{0}
\end{equation}
for any choice of $\ttw,\ttx\in\Z/3\Z$.
This determines, 
for each $(\tty,\ttz)\in(\Z/3\Z)^2$ with $\tty\ne\ttz$, 
a unique triangle triple 
$\Delta''=(a_0'',a_1'',a_2'')$ by the conditions:
\begin{align}
&\text{ $\Delta''$ shares the centroid with $\Delta,\Delta'$, 
}  
\label{condition1}
\\
&\text{ in other words, the three triangles $\Delta,\Delta'$ and $\Delta''$ are concentroid;} 
\nonumber
\\
&\ovec{a_{\ttw+k}a_{\ttx+k}'}=\ovec{a_{\tty+k}''a_{\ttz+k}''}
\quad  (k\in\Z/3\Z).
\label{condition2} 
\end{align}

\begin{Definition}[$(p,q)$-median triangle] \label{def-pq-median}
Let $p,q\in\C$ with $pq\ne 1$, and $\ttw,\ttx,\tty,\ttz\in\Z/3\Z$ with
$\tty\ne\ttz$. 
Given a triangle triple $\Delta=(a_0,a_1,a_2)$ with 
$\Delta'=\cS_{p,q}(\Delta)=(a_0',a_1',a_2')$, 
we define the triangle triple
$$
\cM_{p,q}^{\ttw\ttx/\tty\ttz}(\Delta):=\Delta'',
$$
where $\Delta''=(a_0'',a_1'',a_2'')$ is determined by the condition
(\ref{condition1})-(\ref{condition2}).
We shall call $\cM^\Mlbl_{p,q}$ a {\it generalized median operator} 
on triangles.
\end{Definition}

Before focusing on specific examples, let us here
illustrate actions of $\cM^\Mlbl_{p,q}$ in a generic sample case:
Let $\Delta=(a_0,a_1,a_2)$ be a triangle $(0,1,\frac{7+8i}{10})$
and let complex parameter $(p,q)$ be set as $(\frac45,\frac{2+4i}{3})$.
Then, the generalized cevian operator $\cS_{p,q}$ maps $\Delta$ to
$\Delta'=(a_0',a_1',a_2')=
(\frac{93}{3050}+\frac{542}{1525}i, \frac{201}{305}-\frac{46}{305}i,
\frac{1541}{1525}+\frac{908}{1525}i)$.
One can form a triangle $\Delta''=(a_0'',a_1'',a_2'')$ formed by the sides
parallel to the three vectors $\ovec{a_0a_0'}$, $\ovec{a_1a_1'}$, $\ovec{a_2a_2'}$.
Here arise six-fold ways to label the vertices $a_0'',a_1'',a_2''$ depending on
choices of pairs $(\tty,\ttz)$ with $\tty,\ttz\in\{0,1,2\}$, $\tty\ne\ttz$
so that $\ovec{a_0 a_0'}=\ovec{a_\tty'' a_\ttz''}$.
Every such a choice yields the generalized
median triangle $\cM_{p,q}^{00/\tty\ttz}(\Delta)$.
The following picture (Figure \ref{Fig1}) 
illustrates
$\cM_{p,q}^{00/01}(\Delta)$,
one of those six choices, such that
$\ovec{a_0a_0'}=\ovec{a_0''a_1''}$.
We also note that $\cM_{p,q}^{00/01}=\cM_{p,q}^{11/12}=\cM_{p,q}^{22/20}$
by definition.

\begin{figure}[htbp]
\begin{center}
\begin{tabular}{cc}
\begin{minipage}{0.5\hsize}
\begin{center}
\includegraphics[scale=0.36, width=0.9\hsize]{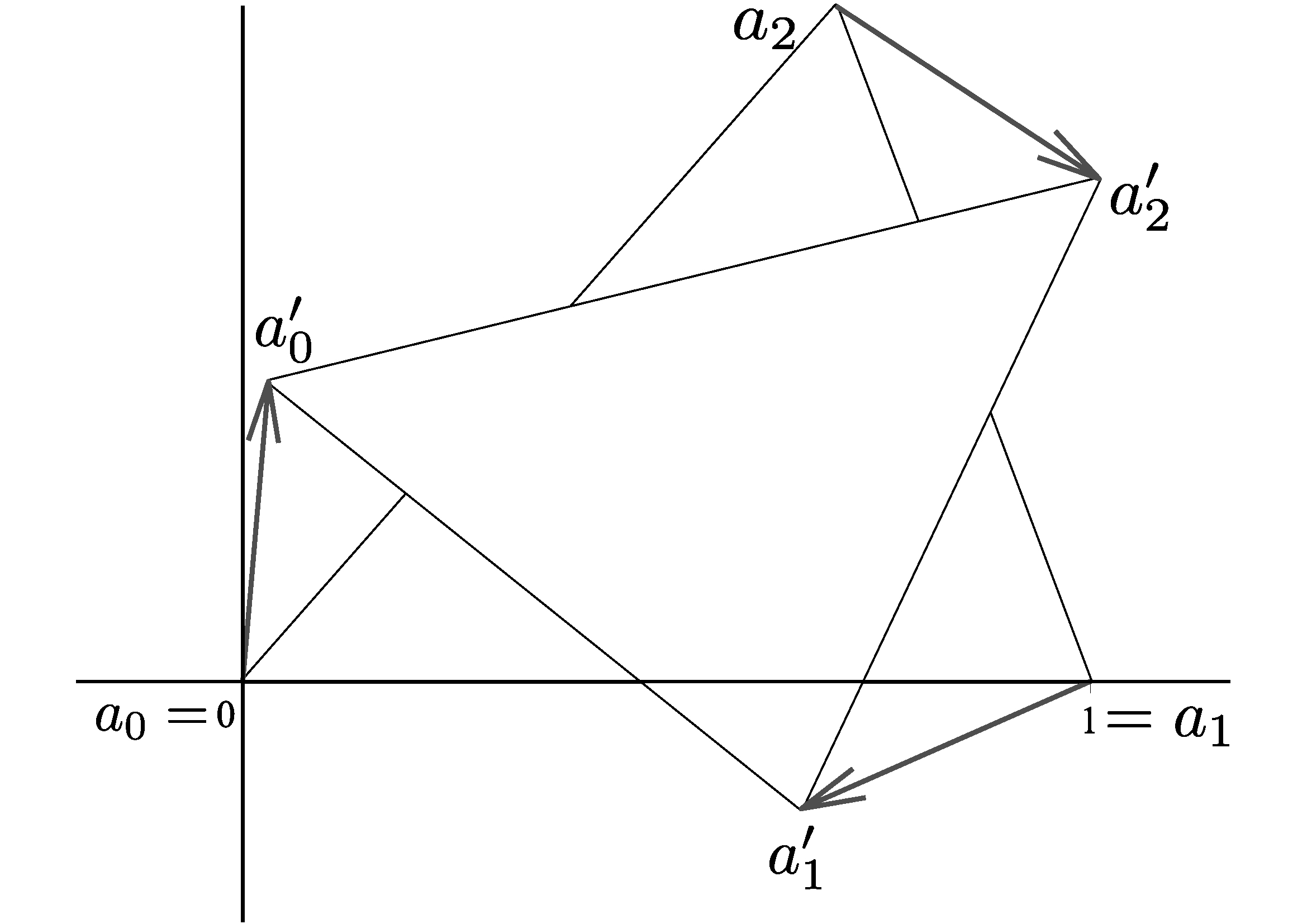}
\caption*{$\Delta=(0,1,\frac{7+8i}{10})$ and $\Delta'=\cS_{\frac45,\frac{2+4i}{3}}(\Delta)$}
\end{center}
\end{minipage} 
&
\begin{minipage}[c]{0.5\hsize}
\centering
\includegraphics[scale=0.36, width=0.9\hsize]{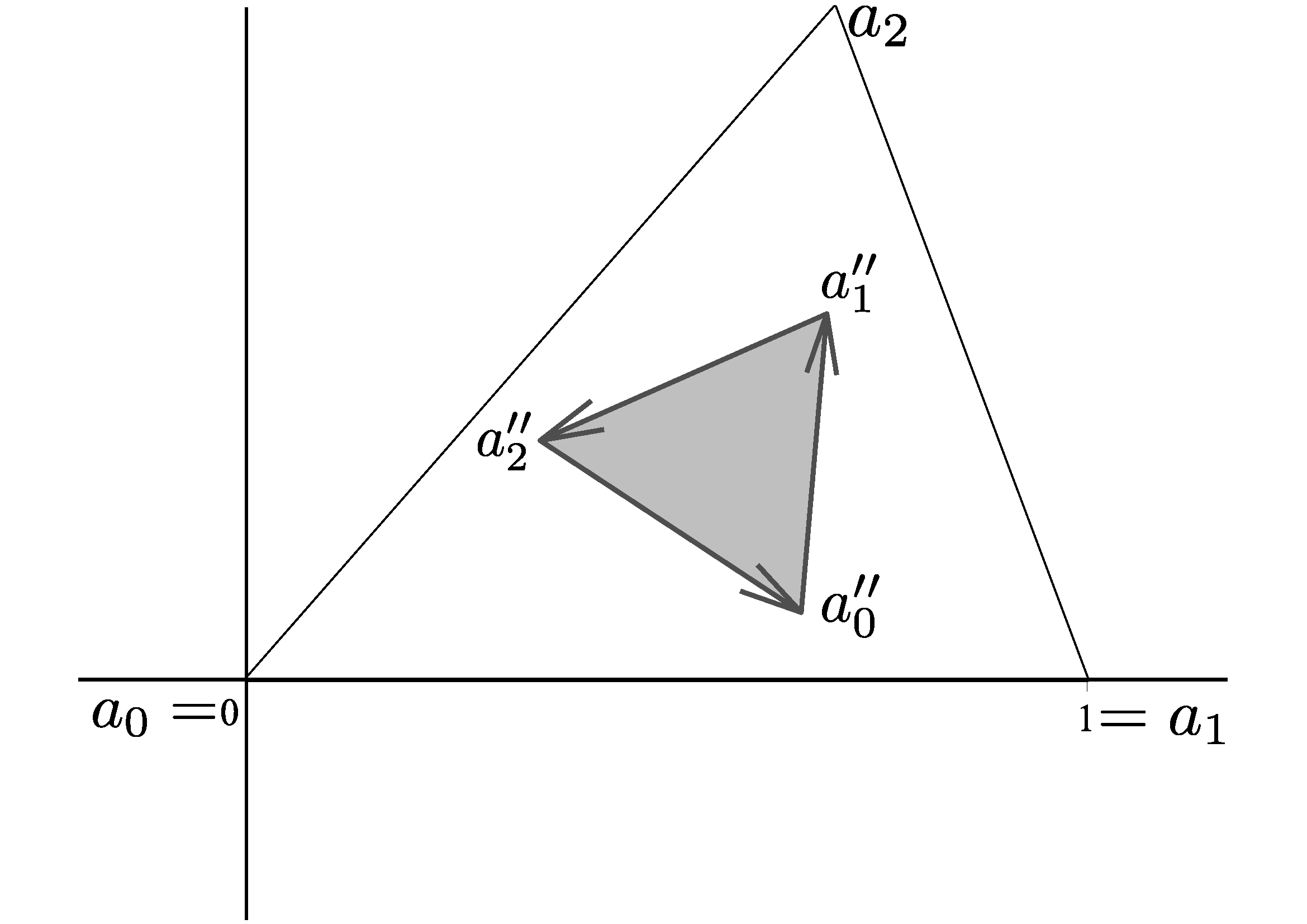}
\caption*{$\Delta$ and $\Delta''=\cM^{00/01}_{\frac45,\frac{2+4i}{3}}(\Delta)$}
\end{minipage} \\
\end {tabular}
\caption{Illustration of $\mathcal{S}_{p,q}$ and $\mathcal{M}_{p,q}^{00/01}$}\label{Fig1}
\end{center}
\end{figure}

\noindent
There is another set of six-fold ways to form 
$\Delta''=(a_0'',a_1'',a_2'')$ whose sides are taken to be 
parallel to $\ovec{a_0a_1'}$, $\ovec{a_1a_2'}$, $\ovec{a_2a_0'}$
in total.
The following picture (Figure \ref{Fig2})
shows one of those cases 
$\cM^{01/01}_{p,q}(\Delta)$
where $a_0'',a_1'',a_2''$ are labeled to satisfy
$\ovec{a_0a_1'}=\ovec{a_0''a_1''}$.
We also note that $\cM_{p,q}^{01/01}=\cM_{p,q}^{12/12}=\cM_{p,q}^{20/20}$
by definition.

\begin{figure}[htbp]
\begin{center}
\begin{tabular}{cc}
\begin{minipage}[c]{0.48\hsize}
\centering
\includegraphics[scale=0.36, width=0.8\hsize]{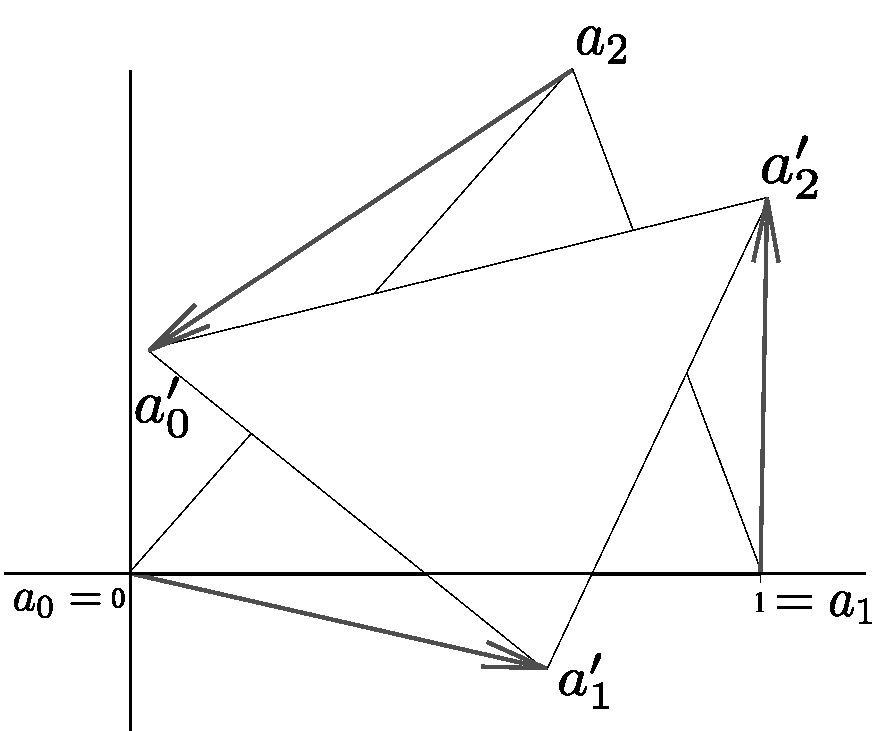}
\caption*{$\Delta=(0,1,\frac{7+8i}{10})$ and $\Delta'=\cS_{\frac45,\frac{2+4i}{3}}(\Delta)$}
\end{minipage}  &
\begin{minipage}[c]{0.48\hsize}
\centering
\includegraphics[scale=0.36, width=0.8\hsize]{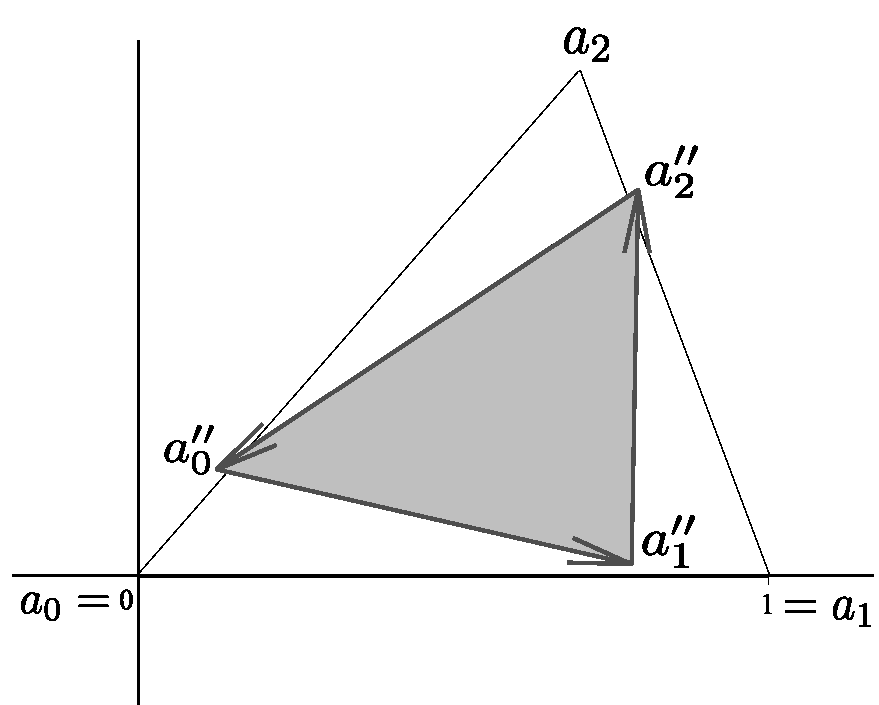}
\caption*{$\Delta$ and $\Delta''=\cM^{01/01}_{\frac45,\frac{2+4i}{3}}(\Delta)$}
\end{minipage} \\
\end {tabular}
\caption{Illustration of $\mathcal{S}_{p,q}$ and $\mathcal{M}_{p,q}^{01/01}$ }\label{Fig2}
\end{center}
\end{figure}

\noindent
It remains to take $\Delta''=(a_0'',a_1'',a_2'')$ formed by three
sides parallel to  $\ovec{a_0a_2'}$, $\ovec{a_1a_0'}$, $\ovec{a_2a_1'}$.
Again we have six-fold ways to label the vertices of $\Delta''$
subject to $\ovec{a_0a_2'}=\ovec{a_{\tty}''a_{\ttz}''}$
($\tty,\ttz\in\{0,1,2\}$, $\tty\ne\ttz$).
The following picture (Figure \ref{Fig3})
illustrates the case $\tty=0,\ttz=1$.
We also note that $\cM_{p,q}^{02/01}=\cM_{p,q}^{10/12}=\cM_{p,q}^{21/20}$
by definition.

\begin{figure}[htbp]
\begin{center}
\begin{tabular}{cc}
\begin{minipage}[c]{0.48\hsize}
\centering
\includegraphics[scale=0.36, width=0.8\hsize]{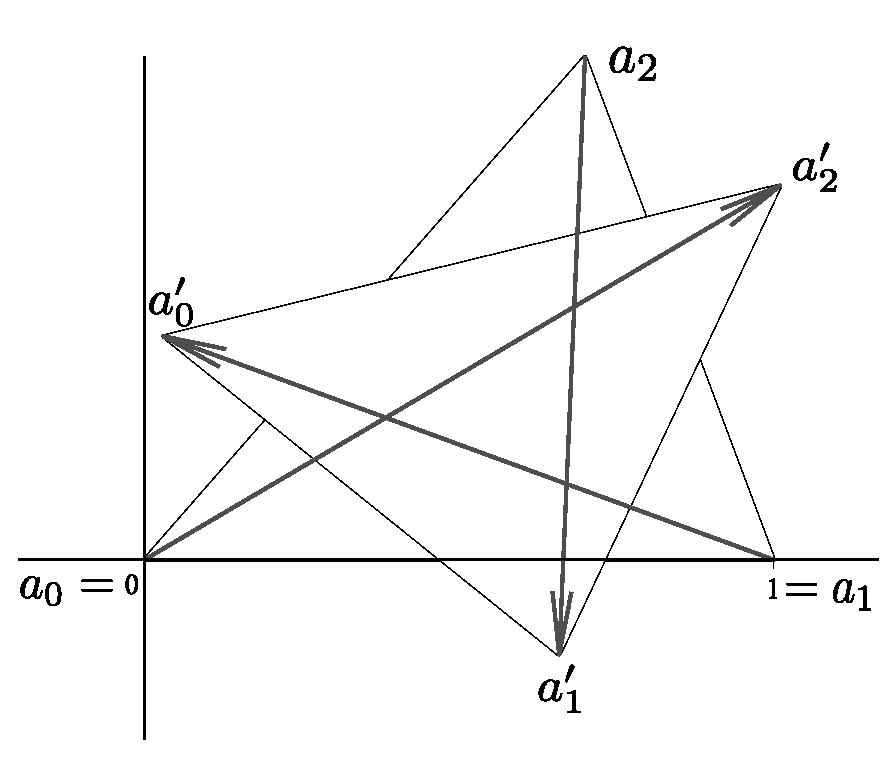}
\caption*{$\Delta=(0,1,\frac{7+8i}{10})$ and $\Delta'=\cS_{\frac45,\frac{2+4i}{3}}(\Delta)$}
\label{tb:table2}
\end{minipage} &
\begin{minipage}[c]{0.48\hsize}
\centering
\includegraphics[scale=0.36, width=0.8\hsize]{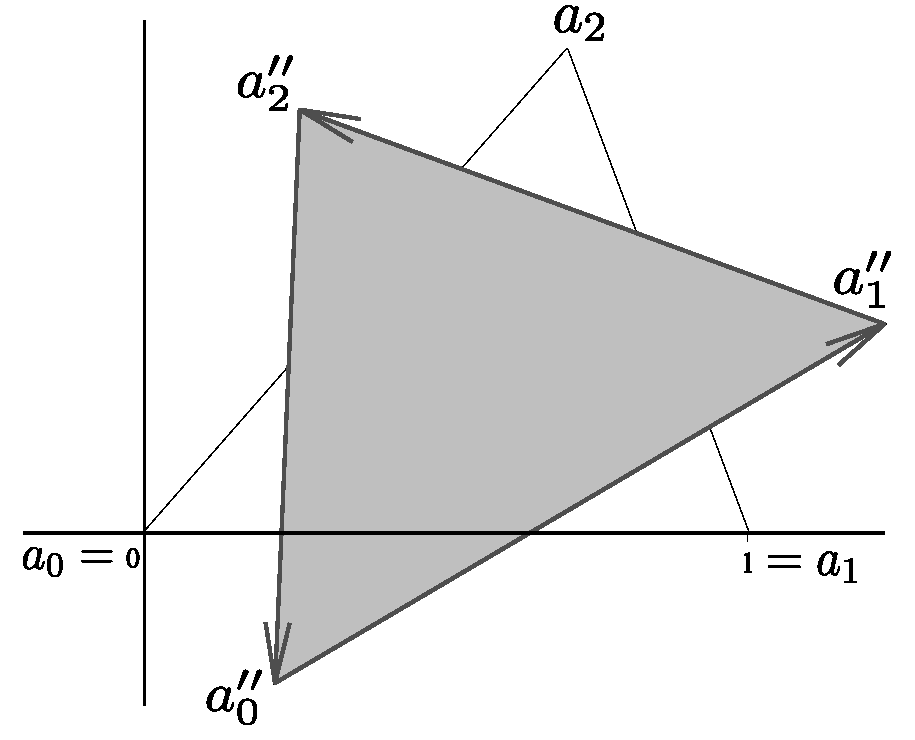}
\caption*{$\Delta$ and $\Delta''=\cM^{02/01}_{\frac45,\frac{2+4i}{3}}(\Delta)$}
\end{minipage}
\end {tabular}
\caption{Illustration of $\mathcal{S}_{p,q}$ and $\mathcal{M}_{p,q}^{02/01}$ }\label{Fig3}
\end{center}
\end{figure}

\noindent
As shown in the above description, we generally have 18($=3\times 6$)-fold 
ways to define $\Delta''=\cM_{p,q}^{\ttw\ttx/\tty\ttz}(\Delta)$ whose sides 
are composed of three disjoint bridges between the vertices of
$\Delta$ and of $\cS_{p,q}(\Delta)$. 
In \S 4, we will discuss precise relations among those 18-fold ways
at the operator level.

In the next two examples, we focus on some specific cases which connect 
$\cM_{p,q}^{\ttw\ttx/\tty\ttz}(\Delta)$
to its historical origins.

\begin{Example}[Prototype]\label{Prototype}
Let $\Delta ABC$ be a triangle represented by a
triple $\Delta=(a,b,c)\in\C^3$.
Let us illustrate the classical case in Introduction in our terminology:
As noted in \cite[Example 1.2]{NO03}, the midpoint triangle 
$\cS(\Delta)=\Delta A'B'C'$ is given by 
$\cS_{0,\frac{1}{2}}(\Delta)$.
The median triangle $\cM(\Delta)=\Delta A''B''C''$ labeled by the condition
$\ovec{AA'}=\ovec{A''B''}$,
$\ovec{BB'}=\ovec{B''C''}$,
$\ovec{CC'}=\ovec{C''A''}$
is then given by $\cM_{0,\frac{1}{2}}^{00/01}(\Delta)$.
\end{Example}

\begin{Example}\label{Hajja}
In \cite{H09}, M.Hajja discusses three types of triangles called 
the $s$-medial, the $s$-Routh, and the $s$-median triangles with a real parameter $s\in \R$. 
The $(p,q)$-median triangle introduced above generalizes Hajja's $s$-median triangle.  
Start with a triangle $\Delta ABC$ represented by a 
{\it positive} triangle triple $\Delta=(a,b,c)$ satisfying $\mathrm{Im}(\frac{a-b}{c-b})>0$.
Form first $\Delta'=(a',b',c')$ to be $\cS_{0,1-s}(\Delta)$ 
(called the $s$-medial triangle of $\Delta$),  
the triangle whose vertices are $(s:1-s)$-division points of the edges of $\Delta$. 
The $s$-median triangle of $\Delta$, written $\cH_s(\Delta)$ is, 
by definition, a triangle $\{a'',b'',c''\}$ such that 
$\ovec{aa'}=\ovec{b''c''}$, $\ovec{bb'}=\ovec{c''a''}$, and 
$\ovec{cc'}=\ovec{a''b''}$.
Without loss of generality, we may assume $\cH_s(\Delta)$ and $\Delta$ are concentroid, i.e. $a+b+c=a''+b''+c''$ so that $\cH_s(\Delta)$ is uniquely determined from $\Delta$. 
In our above definition, we find $\cH_s(\Delta)$ to be
$\cM^{00/12}_{0,1-s}(\Delta)$. 
\end{Example}

\section{Fourier parameters}
The collection of operators 
$\bbS':=\{\cS_{p,q}\mid (p,q)\in\C^2, pq\ne 1\}$ is incomplete 
in the sense that 
the composition $\cS_{p_1,q_1}\cS_{p_2,q_2}$ may not always be
of the form of an $\cS_{p,q}\in\bbS'$.
The lesson found in our previous work \cite{NO2} to remedy this defect 
is to introduce the Fourier transforms $\Psi(\Delta)$ for 
triangles $\Delta=(a,b,c)$ by
\begin{equation}
\label{fourier}
\Psi(\Delta)=\tvect{\psi_0(\Delta)}{\psi_1(\Delta)}{\psi_2(\Delta)}
=\frac13
\tvect{a+b+c}{a+b\omega^2+c\omega}{a+b\omega+c\omega^2}
\end{equation}
and to replace the parameter $(p,q)\in\C^2$ ($pq\ne 1$)
by a new parameter $(\eta,\eta')\in \C^2$ defined by 
\begin{equation} \label{etafrompq}
\eta:=\frac{p-q}{1-pq}+\frac{(p-1)(2q-1)}{1-pq}\omega, \quad 
\eta':=\frac{p-q}{1-pq}+\frac{(p-1)(2q-1)}{1-pq}\omega^2.
\end{equation}
Indeed, with these parameters, the operator 
$\cS_{p,q}$ is diagonalized as
mapping $\Delta$ to $\Delta'$ in the form 
\begin{equation} \label{EtaTimesPsi}
\psi_0(\Delta')=\psi_0(\Delta),\quad
\psi_1(\Delta')=\eta' \cdot\psi_1(\Delta),\quad
\psi_2(\Delta')=\eta \cdot\psi_2(\Delta).
\end{equation}
It turns out that the collection 
$\bbS':=\{\cS_{p,q}\mid (p,q)\in\C^2, pq\ne 1\}$
extends to a more complete family
\begin{equation}
\bbS:=\{\cS[\eta,\eta']\mid (\eta,\eta')\in\C^2\}
\end{equation}
by identifying 
$\cS_{p,q}=\cS[\eta,\eta']$ for $pq\ne 1$
so that the
composition law 
$\cS[\eta_1,\eta_1']\cS[\eta_2,\eta_2']=\cS[\eta_1\eta_2,\eta_1'\eta_2']$
provides a natural multiplicative monoid structure on $\bbS$.

Now, regarding triangle triples as column vectors in $\C^3$, 
we easily see that the operations 
$\cS_{p,q}$ and $\cS[\eta,\eta']$
naturally determine linear transformations 
(3 by 3 matrices in $M_3(\C)$) 
acting on $\C^3$ on the left. 
Below, we shall identify those operators as their matrix representatives
in $M_3(\C)$.
Let 
$$
 \sI:=\begin{pmatrix} 1 & 0 & 0 \\ 0 & 1 & 0 \\ 0 & 0 & 1 \end{pmatrix}, \quad
 \sJ:=\begin{pmatrix} 0 & 1 & 0 \\ 0 & 0 & 1 \\ 1 & 0 & 0  \end{pmatrix}, \quad
 W:=\begin{pmatrix} 1 & 1 & 1 \\ 1 &\omega & \omega^2 \\1&\omega^2&\omega
\end{pmatrix}.
$$
Note that the above Fourier transform (\ref{fourier}) may be
written in the matrix multiplication form: 
$\Psi\bigl(\tvect{a}{b}{c}\bigr)=W^{-1}\tvect{a}{b}{c} $.
The following proposition summarizes basic properties for 
$\cS[\eta,\eta']\in\bbS$:

\begin{Proposition}[\cite{NO2}] \label{Spq:eqn}
Notations being as above, we have:
\begin{enumerate}[label=(\roman*),font=\upshape]
\item[(i)] $\bbS=\{\alpha\sI+\beta\sJ+\gamma\sJ^2\mid \alpha+\beta+\gamma=1\}\subset M_3(\C)$.
\item[(ii)]
$\cS[\eta,\eta']=W\cdot \diag(1,\eta',\eta)\cdot W^{-1}$ \\
$\qquad\quad
=\frac{1}{3}(1+\eta+\eta')\sI
+\frac{1}{3}(1+\eta\omega+\eta'\omega^2)\sJ
+\frac{1}{3}(1+\eta\omega^2+\eta'\omega)\sJ^2
$ $\ (\eta,\eta'\in\C)$.
\end{enumerate}
\end{Proposition}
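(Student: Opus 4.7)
The plan is to reduce the proposition to a matrix identity and then verify it by direct computation, using the diagonalization of $\sJ$ in the finite Fourier basis. First I would rewrite equation (2.1), viewed as a linear map on column vectors $\tvect{a_0}{a_1}{a_2}$, as the circulant matrix
\[
\alpha_{p,q}\sI+\beta_{p,q}\sJ+\gamma_{p,q}\sJ^2,
\]
since the successive rows of this matrix cycle $(\alpha,\beta,\gamma)\to(\gamma,\alpha,\beta)\to(\beta,\gamma,\alpha)$ in agreement with (2.1). Because $\alpha_{p,q}+\beta_{p,q}+\gamma_{p,q}=1$, this already realizes $\bbS'$ as a subset of the circulant locus appearing in (i).

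Next, for part (ii), I would exploit the fact that $\sJ$ is diagonalized by $W$ with eigenvalues $1,\omega,\omega^2$, so the columns of $W$ form a common eigenbasis for every circulant $\alpha\sI+\beta\sJ+\gamma\sJ^2$. On the other hand, the defining relation (3.3) of $\cS[\eta,\eta']$ says precisely that this operator acts diagonally in the Fourier coordinates with diagonal entries $(1,\eta',\eta)$; since the Fourier transform in (3.1) is $\Psi=W^{-1}$, this reads
\[
\cS[\eta,\eta']=W\cdot\diag(1,\eta',\eta)\cdot W^{-1}.
\]
To obtain the second equality in (ii), I would expand the above product using $W^{-1}=\tfrac{1}{3}\overline{W}$ (which follows from the orthogonality relation $1+\omega+\omega^2=0$), then collect the resulting expression into the basis $\{\sI,\sJ,\sJ^2\}$. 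Reading off coordinates yields
\[
\alpha=\tfrac{1}{3}(1+\eta+\eta'),\quad
\beta=\tfrac{1}{3}(1+\eta\omega+\eta'\omega^2),\quad
\gamma=\tfrac{1}{3}(1+\eta\omega^2+\eta'\omega),
\]
which is the second expression in (ii).

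For part (i), I would combine the two directions. On the one hand, the explicit formula just obtained shows $\alpha+\beta+\gamma=1$, since the $\eta$- and $\eta'$-contributions each carry the factor $1+\omega+\omega^2=0$. Conversely, the $\C$-linear map $(\eta,\eta')\mapsto(\alpha,\beta,\gamma)$ above is a bijection onto the affine hyperplane $\{\alpha+\beta+\gamma=1\}$, its inverse being the companion Fourier pair $\eta=\alpha+\beta\omega^2+\gamma\omega$, $\eta'=\alpha+\beta\omega+\gamma\omega^2$. Thus every circulant with row-sum $1$ is realized as some $\cS[\eta,\eta']$, giving the claimed equality of sets.

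The main obstacle, as I anticipate it, is not a genuine mathematical difficulty but careful bookkeeping: one must keep track of which eigenvalue ($\eta$ versus $\eta'$) corresponds to which eigenvector (the $\omega$- versus the $\omega^2$-column of $W$) in (3.3), and apply the convention $\omega^2=\overline{\omega}$ consistently when inverting $W$. Once these conventions are fixed, the remaining steps are mechanical verification, and the compatibility of the composition law $\cS[\eta_1,\eta_1']\cS[\eta_2,\eta_2']=\cS[\eta_1\eta_2,\eta_1'\eta_2']$ with (i) drops out automatically from $\diag$-multiplication in the Fourier basis.
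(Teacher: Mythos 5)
Your proof is correct: the identification of $\cS_{p,q}$ with the circulant $\alpha_{p,q}\sI+\beta_{p,q}\sJ+\gamma_{p,q}\sJ^2$, the simultaneous diagonalization of all circulants by $W$ (with the eigenvalue bookkeeping $1,\eta',\eta$ against the columns $(1,1,1)^T,(1,\omega,\omega^2)^T,(1,\omega^2,\omega)^T$ matching (\ref{EtaTimesPsi})), and the inverse Fourier pair $\eta=\alpha+\beta\omega^2+\gamma\omega$, $\eta'=\alpha+\beta\omega+\gamma\omega^2$ all check out. The paper states this proposition without proof, citing \cite{NO2}, and your argument is exactly the diagonalization computation that the surrounding text (e.g.\ the remark $\Psi=W^{-1}$ and Example \ref{MequalS}) implicitly relies on, so there is nothing to add.
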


Let us turn to generalized median operators.
We first extend $\cM_{p,q}^{\ttw\ttx/\tty\ttz}$ ($pq\ne 1$) to 
the new parameters $(\eta,\eta')\in \C^2$.
Below, we understand the number $\omega^{\ttx}$ and the matrix $\sJ^{\ttx}$ 
in the obvious sense for each $\ttx\in\Z/3\Z$. 

\begin{Definition}[$(\eta,\eta')$-median triangles] \label{eta-median}
Let $\eta,\eta'\in\C$, and let $\ttw,\ttx,\tty,\ttz\in\Z/3\Z$ with
$\tty\ne\ttz$. 
Given a triangle triple $\Delta=(a_0,a_1,a_2)$ with 
$\Delta'=\cS[\eta,\eta'](\Delta)=(a_0',a_1',a_2')$, 
we define the triangle triple
$$
\cM^{\ttw\ttx/\tty\ttz}[\eta,\eta'](\Delta):=\Delta'',
$$
where $\Delta''=(a_0'',a_1'',a_2'')$ is determined by the condition
(\ref{condition1})-(\ref{condition2}).
\end{Definition}
 
It is not difficult to see that $\cM^{\ttw\ttx/\tty\ttz}[\eta,\eta']\in\bbS$.
In fact, we have the following explicit formula:

\begin{Proposition}\label{Mpq:eqn}
Given $\eta,\eta'\in\C$ and  $\ttw,\ttx,\tty,\ttz\in\Z/3\Z$ with
$\tty\ne\ttz$, we have
$$
(\sJ^{\ttz}-\sJ^{\tty})\cM^{\Mlbl}[\eta,\eta']=\sJ^{\ttx}\cS[\eta,\eta']-\sJ^{\ttw}.
$$
\end{Proposition}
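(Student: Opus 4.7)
The plan is to unpack the defining condition (\ref{condition2}) directly into a matrix identity by reading it as a vector equation in $\C^3$. The key bookkeeping observation is that, with $\sJ$ the cyclic shift of Proposition \ref{Spq:eqn}, for any column vector $v=(v_0,v_1,v_2)^T\in\C^3$ and any $m\in\Z/3\Z$ one has $(\sJ^m v)_k = v_{k+m}$ (indices mod $3$). Thus the scalar relation $a_{\ttx+k}'-a_{\ttw+k}=a_{\ttz+k}''-a_{\tty+k}''$ appearing in (\ref{condition2}) is exactly the $k$-th coordinate of the vector equation $\sJ^{\ttx}\Delta' - \sJ^{\ttw}\Delta = \sJ^{\ttz}\Delta'' - \sJ^{\tty}\Delta''$.

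First I would fix a triangle triple $\Delta\in\C^3$, set $\Delta':=\cS[\eta,\eta'](\Delta)=\cS[\eta,\eta']\,\Delta$ and $\Delta'':=\cM^{\Mlbl}[\eta,\eta'](\Delta)$, and rewrite condition (\ref{condition2}) as the single vector identity
\begin{equation*}
(\sJ^{\ttz}-\sJ^{\tty})\Delta'' \;=\; \sJ^{\ttx}\Delta' - \sJ^{\ttw}\Delta.
\end{equation*}
Substituting $\Delta'=\cS[\eta,\eta']\,\Delta$ on the right gives
\begin{equation*}
(\sJ^{\ttz}-\sJ^{\tty})\,\cM^{\Mlbl}[\eta,\eta']\,\Delta \;=\; \bigl(\sJ^{\ttx}\cS[\eta,\eta']-\sJ^{\ttw}\bigr)\Delta.
\end{equation*}
Since this equality is linear and holds for every $\Delta\in\C^3$ (once we know $\cM^{\Mlbl}[\eta,\eta']$ is a well-defined linear endomorphism of $\C^3$, which follows because both (\ref{condition1}) and (\ref{condition2}) are $\C$-linear constraints on $\Delta''$ in terms of $\Delta$), we may drop $\Delta$ to obtain the asserted matrix identity.

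The only point requiring any care is the well-definedness and linearity of $\cM^{\Mlbl}[\eta,\eta']$: the kernel of $\sJ^{\ttz}-\sJ^{\tty}$ is one-dimensional (spanned by $(1,1,1)^T$, since the eigenvalues of $\sJ$ are $1,\omega,\omega^2$ and $\tty\ne\ttz$), so (\ref{condition2}) determines $\Delta''$ only up to an additive constant vector, and the concentroid condition (\ref{condition1}) $(1,1,1)\Delta''=(1,1,1)\Delta$ pins it down uniquely and linearly. Once this linearity is in place, the argument above is essentially a one-line reindexing, and I expect no real obstacle beyond making the $\sJ^m$-bookkeeping transparent. The centroid condition (\ref{condition1}) itself plays no role in the claimed formula, which is why it does not appear on either side of the identity.
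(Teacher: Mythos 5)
Your proposal is correct and follows essentially the same route as the paper: both proofs read the defining condition (\ref{condition2}) as the coordinatewise form of the vector identity $(\sJ^{\ttz}-\sJ^{\tty})\Delta''=\sJ^{\ttx}\Delta'-\sJ^{\ttw}\Delta$ and then pass to the matrix statement. The only (cosmetic) difference is how degenerate triples are handled --- the paper invokes a continuity argument, while you observe directly that (\ref{condition1})--(\ref{condition2}) are linear constraints determining $\Delta''$ linearly in $\Delta$, which is a perfectly valid and arguably cleaner justification.
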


\begin{proof}
Let $\Delta=(a_0,a_1,a_2)$ be a triangle triple, and write 
$\Delta'=\cS[\eta,\eta'](\Delta)=(a_0',a_1',a_2')$
and 
$\Delta''=\cM^{\Mlbl}[\eta,\eta'](\Delta)
=(a_0'',a_1'',a_2'')$.
The assertion essentially amounts to seeing the identity
$$
(\sJ^{\ttz}-\sJ^{\tty})(\Delta'')
=\sJ^{\ttx}(\Delta')-\sJ^{\ttw}(\Delta).
$$
Observe that the 1st component of 
$\sJ^{\ttx}(\Delta')-\sJ^{\ttw}(\Delta)$
is $\ovec{a_\ttw a_\ttx'}$, and that 
the 1st component of 
$\sJ^{\ttz}(\Delta'')-\sJ^{\tty}(\Delta'')$
is $\ovec{a_\tty'' a_\ttz''}$.
They coincide with each other by definition.
Similarly, one can see the coincidence of their 2nd and 3rd components,
as they are the 1st components of the above 
after $\Delta$ replaced by $\sJ\Delta$, $\sJ^{2}\Delta$. 
One can extend the identity also for degenerate triangle triples 
by easy argument of continuity, and hence conclude 
the matrix identity as asserted.
\end{proof}

Although the factor $(\sJ^{\ttz}-\sJ^{\tty})$ in LHS of the above 
Proposition \ref{Mpq:eqn}
is not an invertible matrix, the concentroid condition (\ref{condition1})
determines $\cM^{\Mlbl}[\eta,\eta']$ in $\bbS$
as seen in the following corollary.
In fact, the generalized median operator $\cM^{\Mlbl}[\eta,\eta']$
turns out to be reduced to a generalized cevian operator
$\cS[\eta_0,\eta_1]$ after a simple change of parameters:

\begin{Corollary} \label{MStran}
Notations being as in Proposition \ref{Mpq:eqn}, 
we have
$$
\cM^{\Mlbl}[\eta,\eta']=\cS[\eta_0,\eta_1]
$$
where
$$
\eta_0=\frac{\eta\omega^{-\ttx}-\omega^{-\ttw}}{\omega^{-\ttz}-\omega^{-\tty}}, \quad
\eta_1=\frac{\eta'\omega^{\ttx}-\omega^{\ttw}}{\omega^{\ttz}-\omega^{\tty}}.
$$
\end{Corollary}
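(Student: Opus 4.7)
The plan is to diagonalize the identity of Proposition~\ref{Mpq:eqn} simultaneously via the Fourier matrix $W$, and simply read off the eigenvalues of $\cM^{\Mlbl}[\eta,\eta']$.

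First I would record that the three columns of $W$ are the standard eigenvectors of $\sJ$ with eigenvalues $1,\omega,\omega^2$, so that $\sJ^{\ttk}=W\cdot\diag(1,\omega^{\ttk},\omega^{-\ttk})\cdot W^{-1}$ for each $\ttk\in\Z/3\Z$. Combined with Proposition~\ref{Spq:eqn}(ii), this shows that every operator in $\bbS$ is simultaneously diagonalized by $W$. Before invoking this, I need $\cM^{\Mlbl}[\eta,\eta']\in\bbS$: the defining conditions (\ref{condition1})--(\ref{condition2}) are linear in the input triple $\Delta$ and manifestly $\Z/3\Z$-cyclic in the indices, so $\cM^{\Mlbl}[\eta,\eta']$ is a linear operator on $\C^3$ commuting with $\sJ$ and fixing $(1,1,1)^{T}$; by Proposition~\ref{Spq:eqn}(i) it lies in $\bbS$, hence may be written
\[
\cM^{\Mlbl}[\eta,\eta']=W\cdot\diag(1,\mu_1,\mu_2)\cdot W^{-1},
\]
where the leading $1$ records the concentroid condition.

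Next, I would conjugate the identity of Proposition~\ref{Mpq:eqn} by $W^{-1}$ on the left and $W$ on the right and read off the three scalar equations in the three Fourier components. The first slot reads $0\cdot 1 = 1-1$, which is trivially satisfied (this is exactly the obstruction caused by non-invertibility of $\sJ^{\ttz}-\sJ^{\tty}$). The remaining two slots give
\[
(\omega^{\ttz}-\omega^{\tty})\,\mu_1 = \omega^{\ttx}\eta'-\omega^{\ttw}, \qquad
(\omega^{-\ttz}-\omega^{-\tty})\,\mu_2 = \omega^{-\ttx}\eta-\omega^{-\ttw}.
\]
Since $\tty\ne\ttz$ both leading coefficients are nonzero, so one solves directly to find $\mu_1=\eta_1$ and $\mu_2=\eta_0$ with exactly the formulas claimed. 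Comparing with $\cS[\eta_0,\eta_1]=W\cdot\diag(1,\eta_1,\eta_0)\cdot W^{-1}$ (again Proposition~\ref{Spq:eqn}(ii)) yields $\cM^{\Mlbl}[\eta,\eta']=\cS[\eta_0,\eta_1]$.

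The main obstacle I anticipate is not the computation but the conceptual point that Proposition~\ref{Mpq:eqn} by itself does \emph{not} determine $\cM^{\Mlbl}[\eta,\eta']$: the matrix $\sJ^{\ttz}-\sJ^{\tty}$ has a one-dimensional kernel (spanned by $(1,1,1)^T$), matched by a corresponding vanishing on the right-hand side. The concentroid condition (\ref{condition1}) supplies precisely the missing datum $\mu_0=1$, and only after one has placed $\cM^{\Mlbl}[\eta,\eta']$ inside $\bbS$ is the diagonalization by $W$ legitimate. Once this small point is handled, the three Fourier slots decouple and the formula for $(\eta_0,\eta_1)$ falls out immediately.
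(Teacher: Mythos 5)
Your proof is correct and follows essentially the same route as the paper's: both place $\cM^{\Mlbl}[\eta,\eta']$ in $\bbS$, use the concentroid condition to supply the eigenvalue $1$ lost in the kernel of $\sJ^{\ttz}-\sJ^{\tty}$, and read off $\eta_0,\eta_1$ by conjugating the identity of Proposition \ref{Mpq:eqn} by $W$. The only cosmetic difference is that the paper adds $N=\frac13(\sI+\sJ+\sJ^2)$ to both sides to make the left-hand factor invertible, whereas you solve the two nontrivial Fourier slots directly and note the first slot is vacuous.
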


\begin{proof}
Let $N=\frac{1}{3}(\sI+\sJ+\sJ^2)$ (i.e., the matrix with all entries $\frac13$)
so that $N(\Delta)=(g,g,g)$ for every triangle $\Delta=(a,b,c)$ with centroid
$g=\frac13 (a+b+c)$.
Since $M=\cM^{\Mlbl}[\eta,\eta']$ preserves centroids of triangles,
we have $NM(\Delta)=N(\Delta)$ for all $\Delta$, hence have
the identity $NM=N$. It follows then from Proposition \ref{Mpq:eqn} that
$(\ast):(\sJ^{\ttz}-\sJ^{\tty}+N)M=\sJ^{\ttx}\cS[\eta,\eta']-\sJ^{\ttw}+N$.
Since the matrix $(\sJ^{\ttz}-\sJ^{\tty}+N)\in\bbS$ is invertible, the identity ($\ast$) 
determines $M$ which itself lies in $\bbS$ by Proposition \ref{Spq:eqn} (ii) and
gives rise to
{\small
$$
\begin{bmatrix} 
{0} &  &  \\
 & {\omega^{\ttz}-\omega^{\tty}} &  \\
  &  & {\omega^{-\ttz}-\omega^{-\tty}}
\end{bmatrix}
\begin{bmatrix} 
{1} &  & \\
 & {\eta_1} &  \\
  &   & {\eta_0}
\end{bmatrix}
=
 \begin{bmatrix} 
{1} &  &  \\
 & {\omega^\ttx} &  \\
  &   & {\omega^{-\ttx}}
\end{bmatrix}
 \begin{bmatrix} 
{1} &  &  \\
 & {\eta'} &  \\
  &   & {\eta}
\end{bmatrix}
-
 \begin{bmatrix} 
{1} &  &  \\
 & {\omega^\ttw} & \\
  &   & {\omega^{-\ttw}}
\end{bmatrix}
$$
}

\noindent
after conjugation by $W$. 
This settles the asserted formula on $(\eta_0, \eta_1)$.
\end{proof}

\section{Reduction of 18-fold ways of $\cM^{\Mlbl}$}

The upper label $\ttw\ttx/\tty\ttz$
for a generalzed median operator 
$\cM^{\ttw\ttx/\tty\ttz}[\eta,\eta'] $ 
is to be given from the collection of 
$(\ttw,\ttx,\tty,\ttz)\in(\Z/3\Z)^4$ with $\tty\ne\ttz$.

Since the condition (\ref{condition2}) is stable under simultaneous
shifts of labels in $\Mlbl$, we have the identity
$\cM^{\ttw\ttx/\tty\ttz}[\eta,\eta'] =\cM^{\ttw+1,\ttx+1/\tty+1,\ttz+1}[\eta,\eta']$
which will be listed below in (\ref{eq2.2}).
As a consequence, there are 18 different ways of labels
up to the shifts in $\Z/3\Z$.
However, there are many other identities which co-relate  
generalized cevian and median operators 
as shown in the following list (\ref{eq2.1})-(\ref{eq2.8}).

\begingroup
\allowdisplaybreaks
\begin{align}
&\cS[\eta,\eta'] =\cS[\eta \omega, \eta'\omega^{-1}] \cdot\sJ 
=\sJ \cdot \cS[\eta \omega, \eta'\omega^{-1}]
\label{eq2.1} 
\\
&\cM^{\ttw\ttx/\tty\ttz}[\eta,\eta'] =\cM^{\ttw+1,\ttx+1/\tty+1,\ttz+1}[\eta,\eta']
\label{eq2.2}
\\
&\cM^{\ttw\ttx/\tty\ttz}[\eta,\eta'] \cdot \sJ = 
\cM^{\ttw+1,\ttx+1/\tty\ttz}[\eta,\eta'] 
\label{eq2.3}
\\
&\cM^{\ttw\ttx/\tty\ttz}[\eta,\eta'] \cdot \sJ^2 
=\cM^{\ttw\ttx/\tty+1,\ttz+1}[\eta,\eta']  
\label{eq2.4}
\\
&\cM^{\ttw\ttx/\tty\ttz}[\eta,\eta'] 
=\cM^{\ttw,\ttx+1/\tty\ttz}[\eta\omega,\eta'\omega^{-1}] =
\cM^{\ttw,\ttx-1/\tty\ttz}[\eta\omega^{-1},\eta'\omega] 
\label{eq2.5}
\\
&\frac{1}{3}\left(
\cM^{\ttw 0/\tty\ttz}[\eta,\eta']+\cM^{\ttw 1/\tty\ttz}[\eta,\eta']+\cM^{\ttw 2/\tty\ttz}[\eta,\eta']
\right)
=
\frac13\sJ^{\ttw+\tty+\ttz}+\frac23\sJ^{\ttw-\tty}
\label{eq2.6}
\\
&\frac{1}{2}\left(
\cM^{\ttw \ttx/\tty\ttz}[\eta,\eta']+\cM^{\ttw \ttx/\ttz\tty}[\eta,\eta']
\right)
=\cS_{\frac12,\frac12}=\frac13(\sI+\sJ+\sJ^2) 
\label{eq2.7}
\\
&\cM^\Mlbl[\omega^{\ttx-\ttw}+\eta,\omega^{\ttw-\ttx}+\eta']
=\sJ^\ttx \cdot \cM^{00/\tty\ttz}[1+\eta,1+\eta'].
\label{eq2.8}
\end{align}
\endgroup

\begin{proof}[Proof of  (\ref{eq2.1})-(\ref{eq2.8}).]
Proposition \ref{Spq:eqn} and Corollary \ref{MStran}
enable one to express $\cS[\eta,\eta']$ and $\cM^{\Mlbl}[\eta,\eta']$
in $\bbS$ as explicit 3 by 3 matrices for every
$(\eta,\eta')\in\C^2$ and for $\ttw\ttx/\tty\ttz$.
Then the proofs of these identities, once discovered,
can be easily verified (say, by using symbolic computer systems). %
\end{proof}

{\bf Interpretation:} 
Let $\Delta$ be a triangle triple and fix a pair of parameters $(\eta,\eta')\in\C^2$.
The relation (\ref{eq2.7}) tells that  
$\cM^{\ttw \ttx/\tty\ttz}[\eta,\eta'](\Delta)$ and $\cM^{\ttw \ttx/\ttz\tty}[\eta,\eta'](\Delta)$
are point-symmetrical about the centroid of $\Delta$. This together with 
(\ref{eq2.2}) implies that 
$\cM^{\ttw \ttx/\tty,\tty+1}[\eta,\eta'](\Delta)$ ($\ttw,\ttx, \tty\in\Z/3\Z$) 
give all possible triangle triples up to point symmetry.
Consider, then, effects of (\ref{eq2.3}) and (\ref{eq2.4}) after remarking
that the action of $\sJ$ on triangle triples changes only labels of vertices. 
(Note also that every matrix in $\bbS$ commutes with $\sJ$.)
{}From this we realize that the three median triangles
$$
\cM^{00/01}[\eta,\eta'](\Delta), \ 
\cM^{01/01}[\eta,\eta'](\Delta), \ 
\cM^{02/01}[\eta,\eta'](\Delta)
$$ 
provide all possibly different triangles
from 18-fold triples $\cM^{\ttw \ttx/\tty\ttz}[\eta,\eta'](\Delta)$ 
in $(\ttw,\ttx,\tty,\ttz)\in(\Z/3\Z)^4$ with $\tty\ne\ttz$
 (up to parallelism, point symmetry and label permutations).
Note also that the last three triangles are also dependent 
by a linear relation (\ref{eq2.6}).

\bigskip
As illustrated in \cite[Remark 3.6]{NO2}, the operations 
$\cS_{p,q}$ have closer geometrical interpretation on triangles
with respect to the original parameters 
$p,q\in\C$ ($pq\ne 1$). 
In the rest of this section, we shall translate the above results
for $\cS[\eta,\eta']$, $\cM^{\Mlbl}[\eta,\eta']$ into the context
of $\cS_{p,q}$, $\cM^{\Mlbl}_{p,q}$.

Noting that the transformation (\ref{etafrompq}) is birational with
\begin{equation} \label{pqfrometa}
p=\frac{1+\eta+\eta'}{2-\omega \eta -\omega^2\,\eta'}, \quad
q=\frac{1+\omega\,\eta+\omega^2\,\eta'}{2-\eta-\eta'}
\end{equation}
(cf. \cite[Prop. 5.13]{NO2}), we translate Corollary \ref{MStran} in
the form 
\begin{equation} \label{MStranpq}
\cM^{\Mlbl}_{p,q}=\cS_{p_1,q_1}
\end{equation}
with $p_1,q_1$ suitable rational functions in $p,q$ and vice versa.
The following table shows some samples chosen from 18 types of labels, 
where 
$$
\Delta''=(a_0'',a_1'',a_2'')=\cM^{\Mlbl}_{p,q}(\Delta)=\cS_{p_1,q_1}(\Delta)
$$ 
for $\Delta=(a_0,a_1,a_2)$ and $\Delta'=(a_0',a_1',a_2')=\cS_{p,q}(\Delta)$:

{\footnotesize
\begin{table}[H]
\caption[$\cM^{\Mlbl}_{p,q}(\Delta)=\cS_{p_1,q_1}(\Delta)$]{$\cM^{\Mlbl}_{p,q}(\Delta)=\cS_{p_1,q_1}(\Delta)$ }
 \label{table1}  
$$
\begin{array}{|c|c|c|c|}\hline
\Mlbl {\rule{0pt}{4ex}}
& \ovec{a_\ttw a_\ttx'}=\ovec{a_\tty'' a_\ttz''} 
& [p_1,q_1]  
& [p,q] 
\\ 
\hline\hline
 {\rule{0pt}{4ex}} 00/01 & \ovec{a_0a_0'}=\ovec{a_0''a_1''}
&    \vmargin{0ex}{1ex}{ \displaystyle\left[{\frac {2\,pq+p-q-2}{4\,pq-p-2\,q-1}},-{\frac {p-2}{1+p}}
\right]  }
&   \vmargin{0ex}{1ex}{ \displaystyle\left[-\frac {q_1-2}{q_1+1},-\frac{p_1-q_1}{(2\,p_1-1)(q_1-1)}\right] }
\\
\hline
 {\rule{0pt}{4ex}} 01/01 & \ovec{a_0a_1'}=\ovec{a_0''a_1''}
&    \vmargin{0ex}{1ex}{\displaystyle\left[{\frac {4\,pq-2\,p-q-1}{2\,pq-p+q-2}},-{\frac {q+1}{q-2}}\right] }
&   \vmargin{0ex}{1ex}{ \displaystyle\left[ \frac{p_1-q_1}{(p_1-2) (q_1-1)} , \frac{2\,q_1-1}{q_1+1} \right] }
\\ 
\hline
 {\rule{0pt}{4ex}} 02/01 & \ovec{a_0a_2'}=\ovec{a_0''a_1''}
&    \vmargin{0ex}{1ex}{ \displaystyle\left[{\frac {p+2\,q-3}{2\,p+q-3}},{\frac {3\,pq-2\,p-q}{3\,pq-p-2\,q}}\right]  }
&   \vmargin{0ex}{1ex}{ \displaystyle\left[ \frac{(p_1-1) (2\,q_1-1)}{(2\,p_1-1) (q_1-1)} , \frac{(p_1-1) (q_1-2)}{(p_1-2) (q_1-1)} \right] }
\\ 
\hline
 {\rule{0pt}{4ex}} 00/12 & \ovec{a_0 a_0'}=\ovec{a_1''a_2''} 
&    \vmargin{0ex}{1ex}{\displaystyle\left[-\frac{(p-2) (q-1)}{p q+2\,p+q-4} , \frac{(2\,p-1) (q-1)}{4\,p q-p-2\,q-1}\right] }
&   \vmargin{0ex}{1ex}{\displaystyle\left[ \frac{3 p_1 q_1-p_1-2\,q_1}{3\,p_1 q_1-2\,p_1-q_1} , \frac{(2\,p_1-1) (q_1-1)}{(p_1-1) (2\,q_1-1)} \right] }
\\ 
\hline
 {\rule{0pt}{4ex}} 00/20 & \ovec{a_0a_0'}=\ovec{a_2''a_0''} 
&    \vmargin{0ex}{1ex}{\displaystyle\left[{\frac {2\,p-1}{p+1}},{\frac {2\,pq+p-q-2}{pq+2\,p+q-4}}\right]  }
&   \vmargin{0ex}{1ex}{\displaystyle\left[ -\frac{p_1+1}{p_1-2} , \frac{(p_1-1) (2\,q_1-1)}{p_1-q_1} \right] } 
\\ 
\hline
\end{array}
$$
\end{table}
}


\begin{Example}\label{Hajja2}
In Example \ref{Hajja}, we identified Hajja's $s$-median operator 
$\cH_s$ with $\cM^{00/12}_{0,1-s}$ for $s\in\R$.
The above formula (\ref{MStranpq}) (cf. Table \ref{table1})
translates it as 
\begin{equation}
\cH_s=\cM^{00/12}_{0,1-s}=\cS_{\frac{2s}{s+3},\frac{s}{2s-3}}.
\end{equation}
The last expression for $s=-3,\frac32$ appears to be singular 
as $\cS_{\infty,\frac13}$, $\cS_{\frac23,\infty}$ respectively,
but these singularities can be removed 
in the language of $(\eta,\eta')$-parameters:
Indeed, by (\ref{etafrompq}) we can interpret 
$\cS_{0,1-s}=\cS[s\omega+(1-s)\omega^2,s\omega^2+(1-s)\omega]$,
hence from Definition \ref{eta-median}, we obtain
$\cM^{00/12}_{0,1-s}=\cM^{00/12}[s\omega+(1-s)\omega^2,s\omega^2+(1-s)\omega]$.
Corollary \ref{MStran} then allows us to compute
\begin{align}
\cH_s&=\cM^{00/12}[s\omega+(1-s)\omega^2,s\omega^2+(1-s)\omega] \\
&=\cS[s+\omega,s+\omega^2]
=\sJ^2+s \left(\frac23\sI-\frac13\sJ-\frac13\sJ^2\right)
\notag
\end{align}
which makes senses on all $s\in\C$.
Finally, formulas (\ref{eq2.1})-(\ref{eq2.5}) 
transform $\cH_s$ 
into various expressions of generalized medians.
For example, for generic complex parameter $s$, one has:
\begin{equation}
\cH_s
=\cM^{00/01}_{\frac{s-2}{s-1}, \frac{s}{s-1}}
=\cM^{01/01}_{\frac{s}{2},\frac{1}{s-1}}
=\cM^{02/01}_{\frac{1}{1-s},\frac{2-s}{2}}. 
\end{equation}
\end{Example}

\begin{Example}[Parameters for $\cM^{\Mlbl}_{p,q}=\cS_{p,q}$]  \label{MequalS}
Let $\Mlbl$ be a given label with $\ttw,\ttx,\tty,\ttz\in\Z/3\Z$, $\tty\ne\ttz$.
By Proposition \ref{Mpq:eqn}, we find that $\cM^{\Mlbl}[\eta,\eta']=\cS[\eta,\eta']$
has a unique solution in the form
$$
\cS[\eta,\eta']=(\sJ^{\ttx}+\sJ^{\tty}-\sJ^{\ttz})^{-1}\sJ^{\ttw} 
=\alpha \sI +\beta \sJ +\gamma \sJ^2
$$
summarized in the following table
$$
\begin{array}{c@{\ \ [}c@{\,,\,\,}c@{\,,\,\,}c@{]\ \ }|c@{\ \ [}c@{\,,\,\,}c@{\,,\,\,}c@{]\ \ }|c@{\ \ [}c@{\,,\,\,}c@{\,,\,\,}c@{]\ \ }}
 Label & \alpha & \beta & \gamma & Label & \alpha & \beta & \gamma  & Label & \alpha & \beta & \gamma  \\ 
 \hline
 00/01 & 4/7 & 2/7 &1/7 & 
 01/01 & 1 & 0 & 0 &  
 02/01 & 1/2 & 1/2& 0  \\
 00/10 &  0& 0 & 1  &  
 01/10 &  1/7 & 2/7& 4/7   &
 02/10 & 0 & 1/2 & 1/2 \\
 00/02 & 4/7 &  1/7  &2/7 & 
 01/02 &  1/2 &  0 &1/2 & 
 02/02 & 1 & 0 & 0 \\
 00/20 & 0 & 1 & 0 & 
 01/20 & 0 & 1/2 & 1/2 & 
 02/20 & 1/7 & 4/7& 2/7  \\
 00/12 & 1/2 &0 &  1/2 & 
 01/12 & 2/7 & 1/7  &  4/7 &
 02/12 & 0 & 0& 1  \\
 00/21 & 1/2 &1/2 & 0 &  
 01/21 & 0 & 1 & 0 & 
 02/21 & 2/7 & 4/7& 1/7  \\
\end{array}
$$
Recall from Proposition \ref{Spq:eqn} (ii) that
the corresponding parameter $(\eta,\eta')$ for each case
is given by
$$
\begin{cases}
&\eta=\alpha+\beta\omega^2+\gamma\omega, \\
&\eta'=\alpha+\beta\omega+\gamma\omega^2.
\end{cases}
$$
Next we search parameters $(p,q)$ with $pq\ne 1$ satisfying
$\cM^{\Mlbl}_{p,q}=\cS_{p,q}$ from the above table.
They are classified into the following three kinds: 
$$
\begin{tabular}{cll}
 (i) & $\cS_{p,q}\in\la\sJ\ra$ ($=\{\sI,\sJ,\sJ^2\}$) ,
& [$01/01$, $00/10$, $02/02$, $00/20$, $02/12$, $01/21$]; \\
 (ii) & $\cS_{p,q}=\frac12(\sJ^i+\sJ^j)$ $(i\ne j)$, 
& [$02/01$, $02/10$, $01/02$, $01/20$, $00/12$, $00/21$]; \\
 (iii) & $\cS_{p,q}=\alpha\sI+\beta\sJ+\gamma\sJ^2$ ($\{\alpha,\beta,\gamma\}=\{\frac17,\frac27,\frac47\}$),
& [$00/01$, $01/10$, $00/02$, $02/20$, $01/12$, $02/21$].
\end{tabular}
$$
The first two cases are uninteresting: (i) occurs when $(p,q)=(0,0),(1,\ast),(\ast,1)$ 
(where $\ast\ne 1$) so that $\cS_{p,q}$ simply represents a permutation
of vertex labels (cf.\,\cite[(3.2)]{NO2}); (ii) occurs when
$\cS_{p,q}(\Delta)$ represents the midpoint triangle, while the sides of 
$\cM^{\Mlbl}_{p,q}(\Delta)$ consists of the half sides of $\Delta$
when
$(p,q)=(0,\frac12),(\frac12,0)$. 
(Note: $\cS_{p,q}=\frac12(\sI+\sJ)$ never occurs).
However, (iii) yields geometrically nontrivial cases 
(as in Figure \ref{Fig4})
when 
$(p,q)=(\frac45,\frac23), (\frac15,\frac13), (\frac23,\frac13), (\frac13,\frac23),
(\frac13,\frac15), (\frac23,\frac45)$.
These are operations for Routh's triangles
discussed in \cite[Example 5.3]{NO2}.

\begin{figure}[htbp]
\begin{center}
\begin{minipage}{0.32\hsize}
\begin{center}
\igl{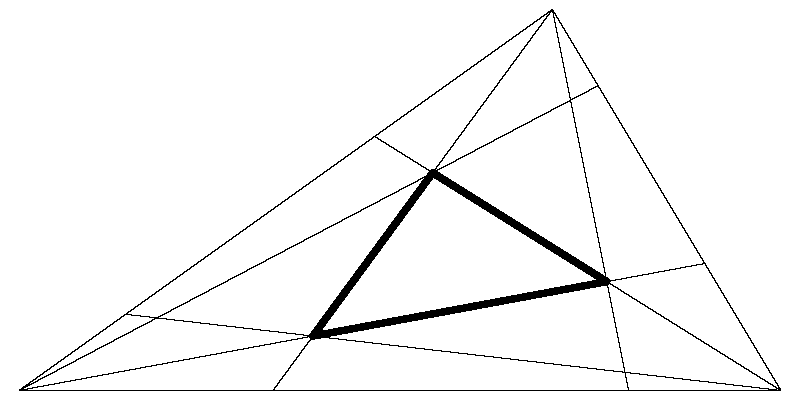} 
\caption*{$\cM^{00/01}_{\frac45,\frac23}(\Delta)=\cS_{\frac45,\frac23}(\Delta)$}
\end{center}
\end{minipage} 
\begin{minipage}[c]{0.32\hsize}
      \centering
$\quad$\igl{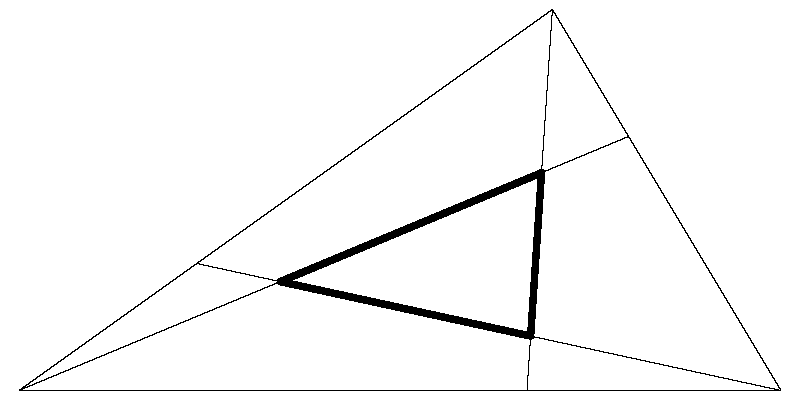} 
     \caption*{$\cM^{00/02}_{\frac23,\frac13}(\Delta)=\cS_{\frac23,\frac13}(\Delta)$}
      \label{tb:table2}
\end{minipage} 
\begin{minipage}[c]{0.32\hsize}
      \centering
$\quad$\igl{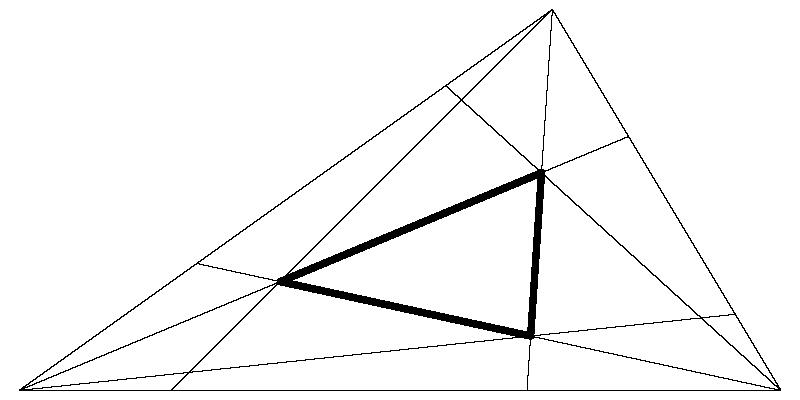}
    \caption*{$\cM^{01/10}_{\frac15,\frac13}(\Delta)=\cS_{\frac15,\frac13}(\Delta)$}
\end{minipage}  
\\
\medskip
\begin{minipage}[c]{0.32\hsize}
      \centering
\igl{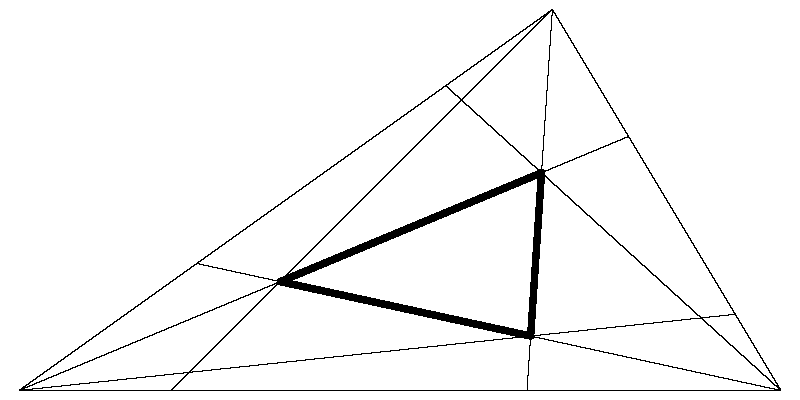}
     \caption*{$\cM^{02/21}_{\frac23,\frac45}(\Delta)=\cS_{\frac23,\frac45}(\Delta)$}
\end{minipage} 
\begin{minipage}[c]{0.32\hsize}
      \centering
$\quad$\igl{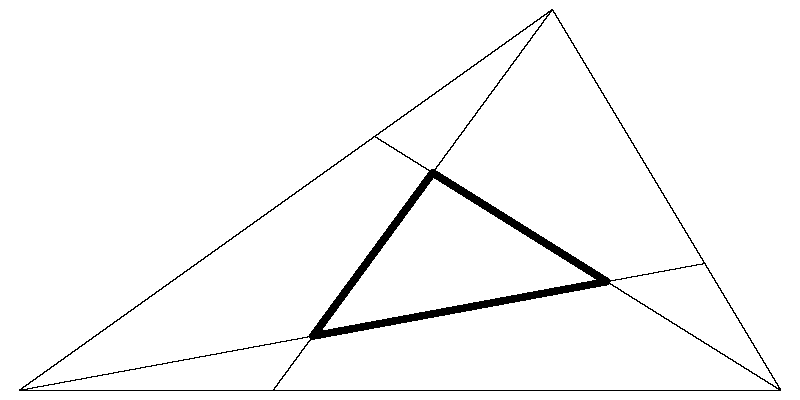} 
   \caption*{$\cM^{02/20}_{\frac13,\frac23}(\Delta)=\cS_{\frac13,\frac23}(\Delta)$}
      \label{tb:table2}
\end{minipage} 
\begin{minipage}[c]{0.32\hsize}
\centering
$\quad$\igl{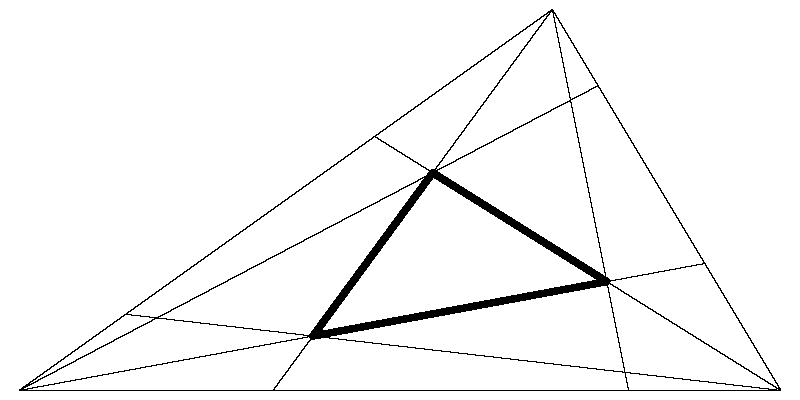} 
 \caption*{$\cM^{01/12}_{\frac13,\frac15}(\Delta)=\cS_{\frac13,\frac15}(\Delta)$}
\end{minipage}
\caption{
Type (iii) for $\cM^{\Mlbl}_{p,q}(\Delta)=\cS_{p,q}(\Delta)$
with $\Delta=\Delta(0)=(0,1,0.7+0.5i)$}
\label{Fig4}
\end{center}
\end{figure}

\end{Example}
%

\section{Shape space and B\'enyi-\'Curgus lifts}

One advantage of considering the (finite)
Fourier transforms of triangle triples (\ref{fourier})
is to enable us to catch directly
the {\it shape function} 
\begin{equation}
\label{shape-fn}
\psi(\Delta):=\frac{\psi_2(\Delta)}{\psi_1(\Delta)} \in \C\cup \{\infty\}=\mathbf{P}^1_\psi(\C)
\end{equation}
for triangle triples $\Delta$ without triple collision,
i.e., $\Delta\not\in \{(a,a,a)\mid a\in\C\}$.
The shape function was explicitly introduced by works of Hajja (e.g. \cite{H09})
and has been investigated by many authors including 
Nicollier \cite{Nic13}, B\'enyi-\'Curgus \cite{BC12}.
The idea of applying finite Fourier transformation to study polygon geometry
can be traced back to I.J.Schoenberg \cite{Sch50}.
The value of shape function $\psi(\Delta)$ represents the modulus
of shape 
(similarity class) of $\Delta$ as a triangle with vertices
labelled as vector components,
whereas the triple power $\psi^3(\Delta)$ represents the shape
of $\Delta$ as an oriented triangle with unlabelled vertices,
which is also useful in some geometrical problem (cf. e.g.,\,\cite{NO03}).
For our later discussions in \S 6, it is useful to set up the moduli 
space of triangles (with no triple collision) and their value spaces 
for $\psi,\psi^3$:

\begin{Construction}
\label{shape-sphere}
Write $(\C^3)^\flat:=\C^3-\{(a,a,a)\mid a\in\C\}$ for 
the collection of triangle triples with no triple collisions,
and 
consider the Fourier transform
$\Psi=(\psi_0,\psi_1,\psi_2)$ of (\ref{fourier}) 
as a vector valued function $(\C^3)^\flat\to\C^3$.
Noticing that $\psi_0(\Delta)$ concerns positioning
of (the centroid of) $\Delta$, we may regard 
the projection to the last two components 
as the classifying map to the
{\it space of translation classes of triangles} 
(written $C^\flat(\psi)$) 
in the form
$$
\mathrm{pr}:(\C^3)^\flat\twoheadrightarrow
C^\flat(\psi):=\left\{\bvect{\psi_2}{\psi_1}\in\C^2
\left\vert (\psi_1,\psi_2)\ne(0,0)  \right.\right\}.
$$ 
Here we mean by `triangle' an ordered triple
$(a,b,c)\in\C^3$ of three vertices 
admitting double collisions but no triple collisions. 
The shape function $\psi(\Delta)=\frac{\psi_2(\Delta)}{\psi_1(\Delta)}$
for $\Delta\in(\C^3)^\flat$ factors through the space $C^\flat(\psi)$
and terminates in 
$\mathbf{P}^1_\psi(\C)=\C\cup\{\infty\}$
which is naturally regarded as the {\it moduli space of 
similarity classes of triangles} (with labelled vertices).
We call  $\mathbf{P}^1_\psi(\C)$
the {\it shape sphere}.
Denote by $\mathbf{P}^1_{33}(\C)$ 
the quotient orbifold of 
$\mathbf{P}^1_\psi(\C)$
by the multiplication action of the cyclic group
$\{1,\omega,\omega^2\}$, 
where the subscript `$33$' indicates 
the two elliptic points of order 3 at $0,\infty$.
Note that the set of complex points 
$\mathbf{P}^1_{33}(\C)$ corresponds
bijectively to $\{\psi(\Delta)^3\mid \Delta\in(\C^3)^\flat\}$.
\end{Construction}

The relation of the above four spaces are summarized 
as in the commutative diagram
\begin{equation}
\label{shape-orbifold}
\begin{tikzcd}
(\C^3)^\flat \arrow[r, "\mathrm{pr}"] & C^\flat(\psi)
\arrow[rr, "\psi" ] \arrow[rrd, "\psi^3"' ] 
&  & {\mathbf{P}^1_\psi(\C)} \arrow[d] 
                                          \\
& &  & \mathbf{P}^1_{33}(\C).                           
\end{tikzcd}
\end{equation}

\begin{Note}
The space $\mathbf{P}^1_\psi(\C)$ is 
a complex analytic model of the {\it shape sphere}
appearing in the study of 3-body problem 
in celestial dynamics (see, e.g., \cite{MM15}).
\end{Note}

Since the operator $\cS[\eta,\eta']$ acts on $\Delta$ with 
componentwise multiplication by $(1,\eta',\eta)$ to $\Psi(\Delta)$
 (cf. (\ref{EtaTimesPsi})),
it acts on the values of shape function
$\psi(\Delta)=\psi_2(\Delta)/\psi_1(\Delta)$ as 
\begin{equation}
\label{eq5.1}
\psi(\cS[\eta,\eta'](\Delta))=\left(\frac{\eta}{\eta'}\right)\psi(\Delta).
\end{equation}

In Example \ref{Hajja}, we discussed Hajja's operator $\cH_s$ 
as a special case of generalized median operator 
$\cM^{00/12}_{0,1-s}$.
As shown in Example \ref{Hajja2}, it is written in 
Fourier parameters $(\eta,\eta')$ as 
\begin{equation} \label{matrixHajja}
\cH_s=\cM^{00/12}_{0,1-s}=\cS[s+\omega,s+\omega^2]
=W
\begin{pmatrix} 
1 & 0 & 0 \\ 0 & s+\omega^2 & 0 \\ 0 & 0 & s+\omega
\end{pmatrix}
W^{-1}.
\end{equation}
On the other hand, B\'enyi-\'Curgus \cite{BC12} introduces
an operator `$\cC_s$' (called the binary Ceva operator after
a seminal article \cite{G05}) for a real parameter $s$
closely related to $\cH_s$.
Although their formulation is given in slightly different language
of side length triples, we may extend the equivalent notion for
complex parameter $s\in\C$ as follows.

\begin{Definition}[binary Ceva operator]
For complex $s\in\C$, define the operator 
$\cC_s$ on triangle triples
by the following matrix expression:
\begin{equation}
\label{matrixBC}
\cC_s=W
\begin{pmatrix} 
1 & 0 & 0 \\ 0 & 0 & s+\omega \\ 0 & s+\omega^2 & 0 
\end{pmatrix}
W^{-1}.
\end{equation}
In other words, $\cC_s$ is defined so as to operate 
on the Fourier parameters of triangle triples by:
$$
\psi_0(\cC_s(\Delta))=\psi_0(\Delta),\quad
\psi_1(\cC_s(\Delta))=(s+\omega)\psi_2(\Delta),\quad
\psi_2(\cC_s(\Delta))=(s+\omega^2)\psi_1(\Delta).
$$
\end{Definition}
%

It is not difficult to see that either of 
$\cH_s$, $\cC_s$ preserve $(\C^3)^\flat\subset\C^3$
if and only if $s\ne \rho,\rho^{-1}$.
Assuming this condition, let us write $\bar\cH_s$, $\bar\cC_s$ for the operators
on $C^\flat(\psi)$ induced respectively from $\cH_s,\cC_s$.
These are also expressed as matrices
acting on $\bvect{\psi_2}{\psi_1}\in C^\flat(\psi)$ 
in the form
\begin{equation}
\label{eq5.9}
\bar\cH_s=\bmatx{s+\omega}{0}{0}{s+\omega^2},\quad
\bar\cC_s=\bmatx{0}{s+\omega^2}{s+\omega}{0}.
\end{equation}
Below we use the quantity $\xi_s:=\frac{s+\omega}{s+\omega^2}$
to denote the multiplier factor for $\cH_s$ on the
shape function. Note that the condition $s\ne \rho,\rho^{-1}$ is equivalent
to $\xi_s\in\C^\times$.
The following numerical identities easily derived from definitions
relate the shapes of
$\Delta$, $\cH_s(\Delta)$, $\cC_s(\Delta)$, $\cH_{1-s}(\Delta)$ 
and of $\cC_{1-s}(\Delta)$:
\begin{equation}
\label{eq5.10}
\psi(\cH_s(\Delta))=\xi_s\cdot \psi(\Delta), \quad
\psi(\cC_s(\Delta))=\xi_s^{-1}\psi(\Delta)^{-1}, \quad
\xi_s^{-1}=\xi_{1-s} 
\qquad (s\ne \rho,\rho^{-1}).
\end{equation}
We are then led to Proposition \ref{BClift} below which
translates selected geometrical relations found 
by B\'enyi-Curgus \cite{BC12} into our language
of operations $\cH_s$, $\cC_s$.
We recall 
that two triangles $\Delta=(a,b,c)$ and 
$\Delta'=(a',b',c')$ are called 
{\it directly similar} (resp. {\it reversely similar}) 
in \cite[p.378]{BC12} if $(a',b',c')$ is similar to
$(a,b,c)$ (resp. $(a,c,b)$) as oriented triangles 
\underline{without} labeling of vertices. 
We will write $\Delta\stackrel{\mathtt{dr}}{\sim}\Delta'$
(resp. $\Delta \stackrel{\mathtt{rv}}{\sim} \Delta'$)
for the direct similarity (resp. reverse similarity) which is 
equivalent to $\psi(\Delta)^3=\psi(\Delta')^{3}$ 
(resp. $\psi(\Delta)^3=\psi(\Delta')^{-3}$).

\begin{Proposition} 
\label{BClift}
Notations being as above, the following formulas and statements hold
for $s,r,u\in\C\setminus\{\rho,\rho^{-1}\}$.
\begin{enumerate}[label=(\roman*),font=\upshape]
\item[(i)] $\bar\cH_s\circ\bar\cH_s=-\bar\cC_{1-s}\circ \bar\cC_s$.
\item[(ii)] $\bar\cC_s\circ\bar\cC_s=(s^2-s+1) \,\mathrm{id}$.
\item[(iii)] For each $\Delta\in(\C^3)^\flat$, we have
$\cC_s(\Delta)\stackrel{\mathtt{rv}}{\sim} \cH_s(\Delta)$. 
\item[(iv)] For each $\Delta\in(\C^3)^\flat$, 
$\cC_s\circ\cC_r(\Delta)\stackrel{\mathtt{dr}}{\sim}\cC_u(\Delta)$
if and only if $(\xi_r\xi_u)^3\cdot\psi(\Delta)^6=\xi_s^3$.
\item[(v)] $\cC_s\circ\cC_r(\Delta)
\stackrel{\mathtt{rv}}{\sim}\cC_u(\Delta)$ 
for all $\Delta\in (\C^3)^\flat$ if and only if
$(\xi_s\xi_u)^3=\xi_r^3$.
\item[(vi)]
If $\Delta\stackrel{\mathtt{rv}}{\sim}\Delta'$, then
$\cC_s(\Delta)\stackrel{\mathtt{rv}}{\sim} \cC_{1-s}(\Delta')$.
\end{enumerate}
\end{Proposition}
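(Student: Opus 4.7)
The plan is to reduce every assertion in the proposition to an algebraic manipulation in the Fourier coordinates $(\psi_1,\psi_2)$, using the matrix forms (\ref{eq5.9}) for $\bar\cH_s,\bar\cC_s$ and the multiplicative transformation laws (\ref{eq5.10}) for $\psi$ under $\cH_s$ and $\cC_s$. The single identity that will be used repeatedly is
\[
(1-s)+\omega=-(s+\omega^{2}),\qquad (1-s)+\omega^{2}=-(s+\omega),
\]
which follows from $1+\omega+\omega^{2}=0$, and which implies $\xi_{1-s}=\xi_{s}^{-1}$ as already recorded in (\ref{eq5.10}).

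For parts (i) and (ii), I would just multiply the diagonal/antidiagonal $2\times2$ matrices directly. For (i), $\bar\cH_{s}\circ\bar\cH_{s}$ is diagonal with entries $(s+\omega)^{2}$, $(s+\omega^{2})^{2}$, while
\[
\bar\cC_{1-s}\circ\bar\cC_{s}=\begin{pmatrix}((1-s)+\omega^{2})(s+\omega) & 0\\ 0 & ((1-s)+\omega)(s+\omega^{2})\end{pmatrix},
\]
and the identity above folds this into $-\bar\cH_{s}^{2}$. For (ii), the product $(s+\omega)(s+\omega^{2})=s^{2}+s(\omega+\omega^{2})+\omega^{3}=s^{2}-s+1$ gives the scalar matrix directly.

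For the similarity statements (iii)--(vi), I would recall that direct (resp.\ reverse) similarity of $\Delta,\Delta'$ is the relation $\psi(\Delta)^{3}=\psi(\Delta')^{\pm 3}$, and then substitute the multiplier rules (\ref{eq5.10}) into each assertion:

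\begin{itemize}
\item[(iii)] Compute $\psi(\cC_{s}(\Delta))^{3}=\xi_{s}^{-3}\psi(\Delta)^{-3}=(\xi_{s}\psi(\Delta))^{-3}=\psi(\cH_{s}(\Delta))^{-3}$.
\item[(iv)] Iterating (\ref{eq5.10}), $\psi(\cC_{s}\circ\cC_{r}(\Delta))=\xi_{s}^{-1}\xi_{r}\,\psi(\Delta)$, and direct similarity to $\cC_{u}(\Delta)$ becomes $(\xi_{s}^{-1}\xi_{r}\psi(\Delta))^{3}=(\xi_{u}^{-1}\psi(\Delta)^{-1})^{3}$, which rearranges to the stated condition $(\xi_{r}\xi_{u})^{3}\psi(\Delta)^{6}=\xi_{s}^{3}$.
\item[(v)] The same computation with the exponent sign flipped on the right-hand side gives $\xi_{s}^{-3}\xi_{r}^{3}\psi(\Delta)^{3}=\xi_{u}^{3}\psi(\Delta)^{3}$, which holds for all $\Delta$ iff $\xi_{r}^{3}=(\xi_{s}\xi_{u})^{3}$.
\item[(vi)] From the hypothesis $\psi(\Delta)^{3}=\psi(\Delta')^{-3}$ one computes $\psi(\cC_{s}(\Delta))^{3}=\xi_{s}^{-3}\psi(\Delta')^{3}$ and $\psi(\cC_{1-s}(\Delta'))^{-3}=\xi_{1-s}^{3}\psi(\Delta')^{3}$; these agree because $\xi_{1-s}=\xi_{s}^{-1}$.
\end{itemize}

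There is really no serious obstacle here: once the operators are expressed in the diagonal/antidiagonal Fourier form, each statement unwinds into a short identity in the two scalars $\xi_{s}$ and $\psi(\Delta)$. The only step demanding any care is the sign in (i), where one must recognize $(1-s)+\omega^{j}=-(s+\omega^{-j})$ before the cancellation can be seen; otherwise the computations proceed in parallel and I would present (i)--(ii) as a matrix calculation and (iii)--(vi) as a uniform substitution into the shape multiplier formula.
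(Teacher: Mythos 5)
Your proposal is correct and follows essentially the same route as the paper: (i)--(ii) by direct multiplication of the $2\times 2$ Fourier matrices in (\ref{eq5.9}) using $1+\omega+\omega^2=0$, and (iii)--(vi) by substituting the multiplier rules (\ref{eq5.10}) into the characterizations $\psi(\Delta)^3=\psi(\Delta')^{\pm 3}$ of direct and reverse similarity. All the individual computations (the sign identity $(1-s)+\omega^{j}=-(s+\omega^{-j})$, the composite multiplier $\xi_s^{-1}\xi_r\psi(\Delta)$, and the use of $\xi_{1-s}=\xi_s^{-1}$ in (vi)) check out and match the paper's argument.
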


\begin{proof}
(i), (ii) follow from matrix computations by (\ref{eq5.9}),
and (iii) follows from (\ref{eq5.10}) at once.
The values of shape function at $\cC_s\circ\cC_r(\Delta)$ and at $\cC_u(\Delta)$ 
are respectively $\xi_s^{-1}\xi_r\cdot \psi(\Delta)$ and $\xi_u^{-1}\psi(\Delta)^{-1}$.
This proves (iv). Next, by a remark preceding the proposition, 
$\cC_s\circ\cC_r(\Delta)$ is reversely similar 
to $\cC_u(\Delta)$ iff $(\xi_s^{-1}\xi_r\psi(\Delta))^3=(\xi_u^{-1}\psi(\Delta)^{-1})^{-3}$,
from which the assertion (v) follows 
by cancelling out the common factor $\psi(\Delta)^3$.
Finally, the assumption of (vi) is equivalent to $\psi(\Delta)^3\psi(\Delta')^3=1$.
On the other hand, the entry quotients of the identities 
$\bar\cC_s(\Delta)=\bvect{(s+\omega^2)\psi_1(\Delta)}{(s+\omega)\psi_2(\Delta)}$ and
$\bar\cC_{1-s}(\Delta')=\bvect{(-s-\omega)\psi_1(\Delta')}{(-s-\omega^2)\psi_2(\Delta')}$
yield $\psi(\cC_s(\Delta))\cdot \psi(\cC_{1-s}(\Delta'))=\psi(\Delta)^{-1} \psi(\Delta')^{-1}$.
Thus, we conclude $\psi(\cC_s(\Delta))^3 \psi(\cC_{1-s}(\Delta'))^3=1$,
that is, $\cC_s(\Delta)\stackrel{\mathtt{rv}}{\sim} \cC_{1-s}(\Delta')$ as desired.
\end{proof}

\begin{Note}
The assertions (i), (ii), (iv), (v)  and (vi) of Proposition \ref{BClift} correspond 
respectively to a property at line $-5$ in p.379, 
Proposition 9.2, Corollary 9.8, Theorem 9.9 and Proposition 9.1
of \cite{BC12}.
The assertion (iii) is originally a source defining property for 
the binary Ceva operator \cite[p.379]{BC12}.
A binary operation $\ast$ defined by the identity $\xi_s\xi_{s'}=\xi_{s\ast s'}$ 
gives a commutative group structure on $\{s\in\mathbf{P}^1(\C)\mid s\ne \rho,\rho^{-1}\}$
which is equivalent to
an operation $\square$ on $\R\cup \{\infty\}$ introduced in \cite{BC12} 
and to the additive operation $[+]$ on $\mathbf{P}^1(\C)-\{\rho,\rho^{-1}\}$
studied in \cite{NO2} after suitable variable changes.
\end{Note}

\begin{Note}
Proposition \ref{BClift} (i), (ii) can be rephrased at the level of 
operators on $(\C^3)^\flat$ respectively as:

(i) $\cH_s\circ\cH_s=\cS[-1,-1]\circ\cC_{1-s}\circ \cC_s$;

(ii) $\cC_s\circ\cC_s=\cS[s^2-s+1,s^2-s+1]$.
\end{Note}

\begin{Note}
The above usage of `reverse similarity' differs from
the one employed in \cite{NO03} where it  
meant $\Delta\stackrel{\mathtt{dr}}{\sim}
\overline{\Delta'}$ (mirror image of $\Delta'$)
that is sometimes called anti-similarity.
\end{Note}


\section{Tracing orbits of triangles}

Since our operators $\cS[\eta,\eta']$ are realized as linear actions
on $\C^3$ that fix $(1,1,1)$ by Proposition \ref{Spq:eqn} (i), 
they commute with every complex 
affine transformation of triangles, in other words, 
$\cS[\eta,\eta']$ commutes with any mapping of the form
$(a,b,c)\mapsto (f(a),f(b),f(c))$ where
$f:z\mapsto \lambda z+\nu$ ($\lambda,\nu\in\C$).
This is not always the case for {\it real} affine transformations. 
We first begin with the following simple lemma.

\begin{Lemma} \label{RealAffine}
Let $(\eta,\eta')\in\C^2$.
The operation $\cS[\eta,\eta']$ commutes with the real affine transformations
of triangles if and only if 
$\bar\eta =\eta'$, i.e., $\eta$ and $\eta'$ are complex conjugate to each other.
\end{Lemma}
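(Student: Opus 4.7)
The plan is to reduce the problem to a single commutation check with the complex conjugation map $\kappa\colon z\mapsto\bar z$. Every real affine transformation of $\C=\R^2$ has the form $z\mapsto \lambda z+\mu\bar z+\nu$ with $\lambda,\mu,\nu\in\C$, and according to its orientation factors either as a complex affine map alone or as a complex affine map composed with $\kappa$. Since the lemma's preamble records that $\cS[\eta,\eta']$ commutes with every complex affine map applied componentwise, the assertion reduces to showing that $\cS[\eta,\eta']$ commutes with the componentwise application of $\kappa$ if and only if $\bar\eta=\eta'$.

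The next step is to read off how $\kappa$ acts through the Fourier transform of (\ref{fourier}). For any triangle triple $\Delta=(a_0,a_1,a_2)$, writing $\bar\Delta=(\bar a_0,\bar a_1,\bar a_2)$ and using $\bar\omega=\omega^2$, a one-line computation gives
\[
\psi_0(\bar\Delta)=\overline{\psi_0(\Delta)},\qquad
\psi_1(\bar\Delta)=\overline{\psi_2(\Delta)},\qquad
\psi_2(\bar\Delta)=\overline{\psi_1(\Delta)};
\]
that is, conjugation fixes the centroid component and swaps the two non-trivial Fourier components while conjugating their values.

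Finally, I combine this with the diagonalization (\ref{EtaTimesPsi}) and compare both sides. The Fourier triple of $\cS[\eta,\eta'](\bar\Delta)$ is $\bigl(\overline{\psi_0(\Delta)},\,\eta'\,\overline{\psi_2(\Delta)},\,\eta\,\overline{\psi_1(\Delta)}\bigr)$, whereas the Fourier triple of $\overline{\cS[\eta,\eta'](\Delta)}$ is $\bigl(\overline{\psi_0(\Delta)},\,\bar\eta\,\overline{\psi_2(\Delta)},\,\overline{\eta'}\,\overline{\psi_1(\Delta)}\bigr)$. Since $\psi_1(\Delta)$ and $\psi_2(\Delta)$ take arbitrary independent complex values as $\Delta$ varies, coincidence of the two triples for all $\Delta$ is equivalent to $\eta'=\bar\eta$ (the second equation $\eta=\overline{\eta'}$ is just its complex conjugate). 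This gives the desired characterization; the only subtle point, and the one I would write out carefully, is the opening reduction, namely that $\kappa$ together with all complex affine maps generates the full group of real affine maps, which turns the biconditional for ``real affine'' into the much cleaner biconditional for $\kappa$ alone.
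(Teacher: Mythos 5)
Your Fourier computation in the second and third steps is correct and cleanly establishes that $\cS[\eta,\eta']$ commutes with componentwise conjugation $\kappa$ if and only if $\bar\eta=\eta'$ (equivalently, that the matrix of $\cS[\eta,\eta']$ is real). The gap is in the opening reduction, which you yourself flag as the subtle point: the claim that every real affine transformation ``factors either as a complex affine map alone or as a complex affine map composed with $\kappa$'' is false. A map such as $z\mapsto 2z+\bar z$ admits no such factorization; more generally, the group generated by the complex affine maps together with $\kappa$ consists only of the direct and indirect \emph{similarities} $z\mapsto\lambda z+\nu$ and $z\mapsto\mu\bar z+\nu$, which is a proper subgroup of the real affine group. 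So commuting with $\kappa$ and with all complex affine maps does not, by a generation argument, yield commutation with a general $f_{\lambda,\mu,\nu}(z)=\lambda z+\mu\bar z+\nu$ having $\lambda\mu\ne 0$, and the ``if'' direction is left unproved as written.

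The reduction is nonetheless salvageable, and the correct repair is essentially the paper's own proof: since $\cS[\eta,\eta']$ is $\C$-linear on $\C^3$ and fixes $(1,1,1)$, applying it to $f_{\lambda,\mu,\nu}(\Delta)=\lambda\Delta+\mu\bar\Delta+\nu(1,1,1)$ gives
$\cS[\eta,\eta']\bigl(f_{\lambda,\mu,\nu}(\Delta)\bigr)-f_{\lambda,\mu,\nu}\bigl(\cS[\eta,\eta'](\Delta)\bigr)=\mu\bigl(\cS[\eta,\eta'](\bar\Delta)-\overline{\cS[\eta,\eta'](\Delta)}\bigr)$,
so commutation with all real affine maps is equivalent to commutation with $\kappa$ alone by \emph{linearity}, not by group generation. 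With that substitution your argument closes; indeed your Fourier-side verification that $\kappa$ swaps $\psi_1$ and $\psi_2$ while conjugating them is a nice explicit replacement for the paper's remark that $\cS[\eta,\eta']\in M_3(\R)$ is ``easily seen'' to be equivalent to $\bar\eta=\eta'$.
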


\begin{proof}
Recall that any real affine transformation of the complex plane $\C$ can be
written as $f_{\lambda,\mu,\nu}(z)=\lambda z+\mu\bar z+\nu$ with
$\lambda,\mu,\nu\in\C$.
Given a triangle triple $\Delta=(a,b,c)$ and $f=f_{\lambda,\mu,\nu}$, write
$f(\Delta):=(f(a),f(b),f(c))$ for the image of $\Delta$ by $f$.
Then, one computes 
$$
\cS[\eta,\eta']\left( f_{\lambda,\mu,\nu}\tvect{a}{b}{c} \right)=
f_{\lambda,\mu,\nu}\left(\cS[\eta,\eta']\tvect{a}{b}{c}\right)
+\mu\cdot \left(\cS[\eta,\eta']-\overline{\cS[\eta,\eta']}\right)
\tvect{a}{b}{c}.
$$
The commutativity of $\cS[\eta,\eta']$ and $f_{\lambda,\mu,\nu}$ holds
if and only if  $\mu=0$ (i.e., $f_{\lambda,\mu,\nu}$ is complex affine)
or 
$\cS[\eta,\eta']=\frac{1}{3}(1+\eta+\eta')\sI
+\frac{1}{3}(1+\eta\omega+\eta'\omega^2)\sJ
+\frac{1}{3}(1+\eta\omega^2+\eta'\omega)\sJ^2
$ is in $M_3(\R)$.
The latter condition is easily seen to be equivalent to 
$\bar\eta=\eta'$.
\end{proof}

In \cite{NO2}, we called $\cS[\eta,\eta']\in\bbS$ 
an area-preserving operator if the associated parameters $\eta,\eta'\in\C$ satisfy 
$|\eta|=|\eta'|=1$. 
The set of area-preserving operators forms a compact multiplicative
torus in $\GL_3(\C)$. 
Since $\cS[\eta,\eta']^k=\cS[\eta^k,{\eta'}^k]$ ($k\in \Z$),
iteration of area-preserving operators 
can be interpolated by one-parameter family of the form
$$
\{\cS[e^{2\pi i mt},e^{2\pi i nt}]\}_{t\in \R}.
$$
We are particularly interested in the case where three
vertices move along a single closed orbit cyclically replacing 
positions of each other after $t\mapsto t+\varpi/3$ 
so that the total motion is left invariant after $t\mapsto t+\varpi$.
Note that, in this situation, 
we may assume $\varpi=1$ and $m,n$ are coprime integers
without loss of generality.
Taking this into accounts, we are led to start with a more general setup:
Recall from \ref{shape-sphere} that $(\C^3)^\flat$ denotes 
the collection of triangle triples with no triple collisions.
Suppose we are given $\Delta\in(\C^3)^\flat$ and 
two continuous functions $\eta,\eta':\R\to(\R/\Z\to) \C$ (with period 1).
We shall consider the periodic maps $\R\to\R/\Z\to (\C^3)^\flat$ in the form
$$
\Delta(t)=\cS[\eta(t),\eta'(t)](\Delta) \text{ or } \cM^\Mlbl[\eta(t),\eta'(t)](\Delta).
$$
Note that generally $\Delta(0)$ may not be the same as the initial $\Delta$ 
and that $\Delta(t)$ may degenerate at some $t$ even 
if $\Delta$ is given as a non-degenerate triangle. 
The family $\{\Delta(t)\}_t$ will be called {\it collision-free} 
if, for every $t\in\R$, $\Delta(t)$ is a (degenerate or non-degenerate)
triangle with three distinct vertices.
We sometimes regard the time parameter $t\in\R$ also as $t\in\R/\Z$
when no confusion could occur.

\begin{Definition}
\label{DefSingleOrbit}
Notations being as above, we say the family $\{\Delta(t)\}_{t\in\R/\Z}$ 
to have a single tracing orbit in ascending (resp. descending) order,
if 
$ \sJ\, \Delta(t) =\Delta(t+\frac{1}{3}) $ 
(resp. $ = \Delta(t-\frac{1}{3}) $). 
\end{Definition}

If $\{\Delta(t)\}_t$ has a single tracing orbit in ascending order, and 
$\Delta(t)$ is written as $(a_0(t),a_1(t),a_2(t))$,  then,
$a_0(t)=a_2(t+\frac13)=a_1(t+\frac23)=a_0(t+1)$ for all $t\in\R$.
We may interpret a collision-free family with this property as
a motion of three particles $a_0,a_1,a_2$ 
moving along a single 
closed orbit so that they trace each other chronologically
with $a_0\to a_1\to a_2\to a_0$.

\begin{Proposition} \label{singleorbit}
Let $\Delta\in(\C^3)^\flat$ and 
$\eta,\eta':\R/\Z \to \C$ be continuous functions with period 1.
\begin{enumerate}[label=(\roman*),font=\upshape]
\item
$\{\cS[\eta(t),\eta'(t)](\Delta)\}_t $ has a single tracing orbit in ascending 
(resp. descending) order
if and only if
\begin{align*} 
&\qquad\eta(t+\frac13)=\eta(t)\,\omega^{-1}, \
\eta'(t+\frac13)=\eta'(t)\,\omega \quad(t\in\R)
\\
&\left( \text{ resp. } 
\eta(t-\frac13)=\eta(t)\, \omega^{-1},  \ 
\eta'(t-\frac13)=\eta'(t)\,\omega  \quad(t\in\R)
\right)
\end{align*}
holds.
\item Let $\Mlbl$ be a label for generalized median operators. Then,
$\{\cM^\Mlbl[\eta(t),\eta'(t)](\Delta)\}_t$
has a single tracing orbit in ascending 
(resp. descending) order
if and only if 
$\tilde\eta(t):=\eta(t)-\omega^{\ttx-\ttw}$,
$\tilde\eta'(t):=\eta'(t)-\omega^{\ttw-\ttx}$
satisfy
\begin{align*} 
&\qquad
\tilde\eta(t+\frac13)=\tilde\eta(t)\,\omega^{-1}, \
\tilde\eta'(t+\frac13)=\tilde\eta'(t)\,\omega  \quad(t\in\R).
\\
&\left( \text{ resp. } 
\tilde\eta(t-\frac13)=\tilde\eta(t)\, \omega^{-1},  \ 
\tilde\eta'(t-\frac13)=\tilde\eta'(t)\,\omega  \quad(t\in\R).
\right)
\end{align*}
\end{enumerate}
\end{Proposition}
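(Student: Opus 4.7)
The plan is to pass to the Fourier picture where both $\cS[\eta,\eta']$ and the cyclic shift $\sJ$ act diagonally, so that the tracing identity $\sJ\Delta(t)=\Delta(t\pm\tfrac13)$ reduces to a pair of scalar identities on $\eta(t),\eta'(t)$.

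First I would compute the Fourier diagonalization of $\sJ$. Directly from $\sJ(a,b,c)=(b,c,a)$ and the formula (\ref{fourier}) one finds
\[
\psi_0(\sJ\Delta)=\psi_0(\Delta),\qquad \psi_1(\sJ\Delta)=\omega\,\psi_1(\Delta),\qquad \psi_2(\sJ\Delta)=\omega^{-1}\psi_2(\Delta),
\]
equivalently $\sJ=W\diag(1,\omega,\omega^{-1})W^{-1}$. Combining this with the diagonal action (\ref{EtaTimesPsi}) of $\cS[\eta(t),\eta'(t)]$, the ascending condition $\sJ\Delta(t)=\Delta(t+\tfrac13)$ becomes the two scalar equations
\[
\eta'(t+\tfrac13)\,\psi_1(\Delta)=\omega\,\eta'(t)\,\psi_1(\Delta),\qquad \eta(t+\tfrac13)\,\psi_2(\Delta)=\omega^{-1}\eta(t)\,\psi_2(\Delta),
\]
with the $\psi_0$-component automatic. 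Assuming $\Delta$ generic in the sense $\psi_1(\Delta)\psi_2(\Delta)\ne 0$, cancellation yields the stated congruences on $\eta,\eta'$; the descending case is entirely parallel. This proves (i).

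For (ii) I would invoke Corollary \ref{MStran} to rewrite $\cM^{\Mlbl}[\eta(t),\eta'(t)]=\cS[\eta_0(t),\eta_1(t)]$ and then apply (i) to the pair $(\eta_0,\eta_1)$. The key algebraic observation is that the numerators in the formulas of Corollary \ref{MStran} factor as $\eta(t)\omega^{-\ttx}-\omega^{-\ttw}=\omega^{-\ttx}\,\tilde\eta(t)$ and $\eta'(t)\omega^{\ttx}-\omega^{\ttw}=\omega^{\ttx}\,\tilde\eta'(t)$, so that $\eta_0(t)$ and $\eta_1(t)$ are nonzero scalar multiples of $\tilde\eta(t)$ and $\tilde\eta'(t)$ respectively. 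Consequently the conditions $\eta_0(t+\tfrac13)=\omega^{-1}\eta_0(t)$ and $\eta_1(t+\tfrac13)=\omega\,\eta_1(t)$ coming from part (i) translate verbatim into the asserted congruences on $\tilde\eta,\tilde\eta'$; again the descending case is the same calculation with the sign of $\tfrac13$ reversed.

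The main point requiring attention is the diagonalization of $\sJ$ with the specific eigenvalues $\omega$ on the $\psi_1$-line and $\omega^{-1}$ on the $\psi_2$-line --- getting these right is precisely what creates the asymmetry between $\eta$ and $\eta'$ in the conclusion. Once that is in place, the rest is a direct algebraic translation through Corollary \ref{MStran}.
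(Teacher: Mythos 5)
Your argument is correct and is essentially the paper's own proof: your diagonalization of $\sJ$ is exactly the content of identity (\ref{eq2.1}) (which says $\sJ\cdot\cS[\eta,\eta']=\cS[\eta\omega^{-1},\eta'\omega]$), from which the paper deduces (i) directly, and part (ii) is handled in both cases by reduction through Corollary \ref{MStran}. The only point worth noting is that your ``only if'' direction in (i) requires $\psi_1(\Delta)\psi_2(\Delta)\ne 0$, whereas membership in $(\C^3)^\flat$ only guarantees $(\psi_1,\psi_2)\ne(0,0)$ --- a genericity caveat that the paper's terse proof glosses over as well.
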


\begin{proof}
(i) follows immediately from (\ref{eq2.1}).
To prove (ii), we make use of Corollary \ref{MStran} to express
$\cM^\Mlbl[\eta(t),\eta'(t)](\Delta)$ as 
$\cS[\eta_0(t),\eta_1(t)]$.
Then, apply (i) for the latter form.
\end{proof}

Let us look more closely at the tracing orbit in relation with 
the shape sphere $\mathbf{P}^1_\psi(\C)$ 
introduced in Definition \ref{shape-sphere}.
Write $\Conf^3(\C)$ (called the {\it configuration space}) 
for the collection of collision-free triples in $(\C^3)^\flat$, i.e., 
$$
\Conf^3(\C):=\{(a,b,c)\in\C^3\mid a\ne b\ne c\ne a\}
\subset (\C^3)^\flat.
$$
Then, it is easy to see that 
the shape function $\psi=\frac{\psi_2}{\psi_1}$ maps
$\Conf^3(\C)$ onto the open locus 
$\mathbf{P}^1_\psi(\C)-\{1,\omega,\omega^2\}$ of the shape sphere
$\mathbf{P}^1_\psi(\C)$.
Given a collision-free single tracing orbit 
$\{\Delta(t)\}_t$ $\subset \Conf^3(\C)$ either in
ascending or descending order, the image $\{\psi(\Delta(t))\}_{0\le t\le 1}$ 
move on a closed curve on $\mathbf{P}^1_\psi(\C)-\{1,\omega,\omega^2\}$.
More precisely, during the process starting/ending 
at the point $\psi_0:=\psi(\Delta(0))=\psi(\Delta(1))$, it passes
two distinguished points $\psi(\Delta(\frac13))=\omega \psi_0$,
$\psi(\Delta(\frac23))=\omega^2 \psi_0$ 
(resp. $\psi(\Delta(\frac13))=\omega^2 \psi_0$,
$\psi(\Delta(\frac23))=\omega \psi_0$)
if $\{\Delta(t)\}_t$ moves in ascending order (resp. in descending order).
Thus, in view of the diagram (\ref{shape-orbifold}), the image of 
$\{\psi(\Delta(t))^3\}_{0\le t\le \frac13}$ forms a closed curve
on $\mathbf{P}^1_{33}(\C)-\{1\}$. 
It is worth noting that the (existence and) 
classification of the tracing orbits $\{\Delta(t)\}_t$ 
sharing a same closed curve on $\mathbf{P}^1_{33}(\C)-\{1\}$
is available as below, where we employ the convention $1/0=\infty$.

\begin{Proposition}
Let $\eps\in\{\pm 1\}$ and let
$\gamma: \R\to \mathbf{P}^1_\psi(\C)-\{1,\omega,\omega^2\}$
be a non-constant continuous map 
such that 
$\gamma(0)\not\in \{0,\infty\}$ and 
$\gamma(t+\frac13)=\omega^\eps
\gamma(t)$ for all $t\in\R$.
Set $\xi(t):=\gamma(t)/\gamma(0)$.
\begin{enumerate}[label=(\roman*),font=\upshape]
\item[(i)] There exists a collision-free single tracing orbit $\{\Delta(t)\}_t$
with $\psi(\Delta(t))=\gamma(t)$
in ascending $($resp. descending$)$ order when $\eps=1$ 
$($resp. $\eps=-1)$, if and only if 
$\xi(t)$ 
can be written in the form 
$\displaystyle
\xi(t)=\frac{\eta_2(t)}{\eta_1(t)}
$
with
$\eta_r:\R\to \C$ satisfying
$\eta_r(t+\frac13)=\omega^{\eps r} \eta_r(t)$ $(r=1,2)$.
\item[(ii)] In particular, 
if $\gamma(t)\ne \infty$ 
$(\forall \, t)$
then
(i) is the case with $\eta_2(t)=e^{2\pi i \eps t} \xi(t)$,
$\eta_1(t)=e^{2\pi i \eps t}$.
\item[(iii)]
Suppose (i) is the case. Then, every possible tracing orbit 
$\{\Delta(t)\}_t$ sharing the same closed curve
$\gamma(t)$ as $\psi(\Delta(t))$
is obtained as
$$
\Delta(t)=W 
\begin{pmatrix} 
1 & 0 & 0 \\ 0 & \eta_1(t)\mu(t) & 0 \\ 0 & 0 & \eta_2(t)\mu(t)
\end{pmatrix}
W^{-1}
\begin{pmatrix} 
a_0 \\ b_0 \\ c_0
\end{pmatrix}
$$
where $\mu:\R\to\C^\times$ is a continuous function with 
period $\frac13$, and $\Delta_0=(a_0,b_0,c_0)$ is a
triangle triple with $\psi(\Delta_0)=\gamma(0)$.
\end{enumerate}
\end{Proposition}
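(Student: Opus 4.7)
The proof rests on the eigenvalue decomposition $\sJ = W\,\mathrm{diag}(1,\omega,\omega^2)\,W^{-1}$, so that $\sJ$ acts on the Fourier components $\psi_0,\psi_1,\psi_2$ by multiplication by $1,\omega,\omega^2$ respectively. Under the tracing condition $\sJ\Delta(t) = \Delta(t+\eps/3)$, this at once yields, for $r=1,2$,
\[
\psi_r(\Delta(t+\tfrac{1}{3})) \;=\; \omega^{\eps r}\,\psi_r(\Delta(t)),
\]
in both the ascending ($\eps=1$) and descending ($\eps=-1$) regimes; for $\eps=-1$ one substitutes $t\mapsto t-1/3$ and uses $\sJ^2 = W\,\mathrm{diag}(1,\omega^2,\omega)\,W^{-1}$. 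This single identity drives all three parts.

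For the necessity in (i), I set $\eta_r(t):=\psi_r(\Delta(t))/\psi_r(\Delta(0))$ for $r=1,2$. The hypothesis $\gamma(0)\notin\{0,\infty\}$ forces $\psi_r(\Delta(0))\ne 0$, so the quotients are well-defined; the displayed equivariance transfers to $\eta_r(t+1/3)=\omega^{\eps r}\eta_r(t)$, and $\eta_2/\eta_1 = \psi(\Delta(t))/\psi(\Delta(0)) = \xi(t)$ is immediate. For sufficiency, given such $(\eta_1,\eta_2)$ and any $\Delta_0$ with $\psi(\Delta_0)=\gamma(0)$, I define $\Delta(t) := W\,\mathrm{diag}(1,\eta_1(t),\eta_2(t))\,W^{-1}\Delta_0$, which by Proposition \ref{Spq:eqn}(ii) equals $\cS[\eta_2(t),\eta_1(t)](\Delta_0)$. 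A direct matrix computation, using the equivariance of $\eta_r$, verifies the tracing condition, while the ratio gives $\psi(\Delta(t)) = \xi(t)\gamma(0) = \gamma(t)$; collision-freeness follows because $\gamma$ avoids $\{1,\omega,\omega^2\}$, placing each $\Delta(t)$ in $\Conf^3(\C)$.

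Part (ii) is a direct verification: using $e^{2\pi i\eps/3} = \omega^\eps$ and $\xi(t+1/3) = \omega^\eps\xi(t)$, one sees at once that the explicit choices $\eta_1(t)=e^{2\pi i\eps t}$ and $\eta_2(t)=e^{2\pi i\eps t}\xi(t)$ satisfy the equivariance of (i), while $\eta_2/\eta_1=\xi$ is clear. The hypothesis $\gamma(t)\ne\infty$ is precisely what keeps $\xi$, and hence $\eta_2$, $\C$-valued.

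For (iii), fix the $(\eta_1,\eta_2)$ of (i) and let $\tilde\Delta(t)$ be any further tracing orbit with $\psi(\tilde\Delta(t))=\gamma(t)$. Setting $\Delta_0:=\tilde\Delta(0)$ and $\tilde\eta_r(t):=\psi_r(\tilde\Delta(t))/\psi_r(\Delta_0)$ produces a second equivariant pair with the same ratio $\xi$; the equations $\tilde\eta_r=\mu\cdot\eta_r$ ($r=1,2$) are therefore consistent and determine a common function $\mu$, whose $1/3$-periodicity is immediate by cancellation of the shared equivariance factor $\omega^{\eps r}$. The converse direction, that every choice of periodic $\mu$ and of $\Delta_0$ produces a tracing orbit of the asserted shape, reduces to the sufficiency argument of (i) applied to the rescaled equivariant pair $(\eta_1\mu,\eta_2\mu)$. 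The principal obstacle I anticipate is verifying that $\mu$ is continuous and $\C^\times$-valued at the points where some $\eta_r$ vanishes: since $\gamma$ takes values in $\mathbf{P}^1_\psi(\C)$, the events $\eta_1(t)=0$ (i.e. $\gamma(t)=\infty$) and $\eta_2(t)=0$ (i.e. $\gamma(t)=0$) are mutually exclusive, so the two candidate formulas $\mu = \tilde\eta_1/\eta_1$ and $\mu = \tilde\eta_2/\eta_2$ each apply on an open subset, agree on the overlap by the common ratio condition, and together glue to a continuous $\mu:\R\to\C^\times$.
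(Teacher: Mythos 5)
Your proposal is correct and follows essentially the same route as the paper's own proof: diagonalizing $\sJ$ via the Fourier transform $W$, defining $\eta_r(t)=\psi_r(\Delta(t))/\psi_r(\Delta(0))$ for the necessity in (i), and in (iii) gluing $\mu$ from the two candidate ratios $\tilde\eta_1/\eta_1$ and $\tilde\eta_2/\eta_2$ using the fact that $\eta_1$ and $\eta_2$ cannot vanish simultaneously. The only differences are cosmetic: you spell out the sufficiency direction of (i) and the collision-freeness check explicitly, where the paper simply cites Proposition \ref{singleorbit}.
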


\begin{proof}
The assumption on $\gamma(0)$ tells that the value
$\lambda_0:=-\frac{\gamma(0)\omega-1}{\gamma(0)-\omega}$ is contained
in $\C-\{0,1,\rho^{\pm 1}\}$ so that $\Delta_0:=(0,1,\lambda_0)\in\Conf^3(\C)$ 
forms a (non-equilateral) triangle triple with $\psi(\Delta_0)=\gamma(0)$.

(i) The `if'-part is already shown in Proposition \ref{singleorbit}, so we here
show the `only if' part. 
Suppose a collision-free single tracing orbit
$\{\Delta(t)\}_t$ exists and
consider the behavior of $\psi_r(\Delta(t))$ $(r=1,2)$. 
Since 
$\gamma(0)=\psi(\Delta(0))=\psi_2(\Delta(0))/\psi_1(\Delta(0))\not\in\{0,\infty\}
$ by assumption, 
we can define for each $r\in\{1,2\}$ a function
$\eta_r:\R\to\C$ by
$\eta_r(t):=\psi_r(\Delta(t))/\psi_r(\Delta(0))$. 
This shows that $\Psi(\Delta(t))=W^{-1}\Delta(t)$
is of the form $\mathrm{diag}(1,\eta_1(t),\eta_2(t)) \Psi(\Delta(0))$.
Then the prescribed property 
$\Delta(t+\frac{\eps}{3})= \sJ\, \Delta(t)$
(Definition \ref{DefSingleOrbit}) implies the required 
properties for $\eta_r(t+\frac13)$ $(r=1,2)$.

(ii) This is immediate after (i).

(iii) Fix a collision-free tracing orbit $\{\Delta(t)\}_t$ with parameter functions
$\eta_1(t),\eta_2(t)$ as in
(i), and pick any other such a family $\{\Delta'(t)\}_t$ with another set of parameter
functions $\eta_1'(t), \eta_2'(t)$. Then, 
$\frac{\eta_2(t)}{\eta_1(t)}=\xi(t)=\frac{\eta_2'(t)}{\eta_1'(t)}$
for all $t\in\R$. Note here that $\xi(t)=0$ (resp. $=\infty$) if and only if
$\eta_1\eta_1'\ne 0$, $\eta_2=\eta_2'=0$ 
(resp. $\eta_2\eta_2'\ne 0$, $\eta_1=\eta_1'=0$).
So the continuity of the map $\xi$ of $\R$ into the Riemann sphere 
$\C\cup\{\infty\}$ enables us to define a continuous function
$\mu:\R\to \C^\times$ by
$\mu(t):=\frac{\eta_1'}{\eta_1}(=\frac{\eta_2'}{\eta_2})$ when $\xi(t)\in\C^\times$, 
$\mu(t):=\frac{\eta_1'}{\eta_1}$ when $\xi(t)= 0$ and
$\mu(t):=\frac{\eta_2'}{\eta_2}$ when $\xi(t)=\infty$.
The periodic property $\mu(t+\frac13)=\mu(t)$ is a consequence of
$\eta_r(t+\frac13)=\omega^{\eps r} \eta_r(t)$
and $\eta_r'(t+\frac13)=\omega^{\eps r} \eta_r(t)'$ $(r=1,2)$.
The vector expression in (iii) follows from the procedure 
described in the above proof of (i) with $\Delta_0=(a_0,b_0,c_0)
=W \mathrm{diag}(1,\eta_1'(0),\eta_2'(0)) W^{-1} \Delta'(0)$,
that is, $\psi(\Delta_0)=\xi(0)\cdot\psi(\Delta'(0))=\xi(0)\gamma(0)=\gamma(0)$.
This completes the proof.
\end{proof}

Now, let us turn back to the area-preserving parameters 
$\eta(t)=e^{2\pi imt}$, $\eta'(t)=e^{2\pi i nt}$ with
coprime integers $m,n\in\Z$ and examine some 
typical cases.

\begin{Example} \label{ExSmn}
Let $\Delta$ be a triangle triple and $m,n$ coprime integers.
By Proposition \ref{singleorbit} (i),
the family 
$$
\{\Delta(t)=\cS[e^{2\pi imt}, e^{2 \pi in t}](\Delta)\}_{t\in\R}
$$
has a single tracing orbit if $m+n\equiv 0 \pmod 3$.
The vertices move in ascending (resp. descending) 
order if $m\equiv 2\pmod 3$ (resp. $m\equiv 1\pmod 3$).
\end{Example}

\begin{Example}[Steiner ellipse] \label{ExSteiner}
The special case $m=-1,n=1$ of Example \ref{ExSmn}
is  
$$
\Delta(t)=\cS[e^{-2\pi it}, e^{2\pi i t}](\Delta).
$$
In this case, starting from $\Delta(0)=\Delta$, the vertices 
of a triangle move on an ellipse with sides tangent 
to an interior ellipse (Figure \ref{Fig5}). 
For an easy proof for the case $\Delta$ is non-degenerate, 
one can apply Lemma \ref{RealAffine} 
to deform $\Delta$ to the equilateral triangle $(1,\omega,\omega^2)$
in real affine geometry.
If $\Delta=(0,1,u+v\sqrt{-1})$, then the circumscribed ellipse 
has the following equation in XY-coordinates of $\C$.
$$
v^2 \left(X-\frac{1+u}{3}\right)^2
+(v-2uv)\left(X-\frac{1+u}{3}\right)\left(Y-\frac{v}{3}\right)
+(1-u+u^2) \left(Y-\frac{v}{3}\right)^2=\frac{v^2}{3}.
$$
\begin{figure}[h]
\begin{center}
\begin{tabular}{c}
\qquad 
\ig{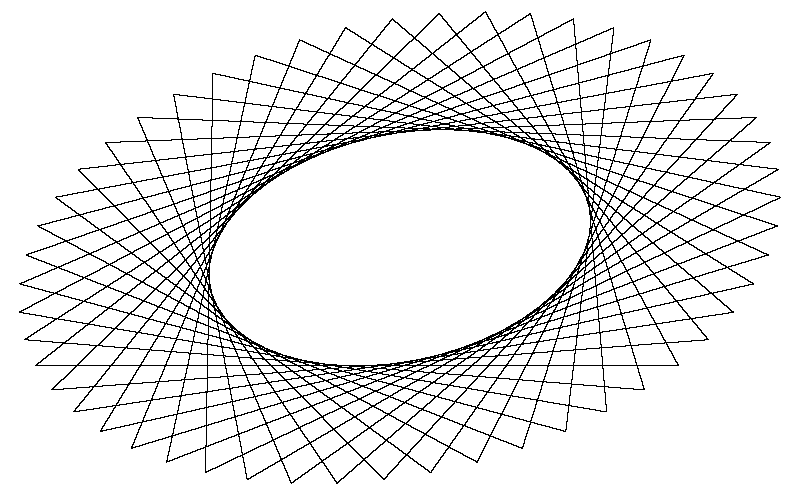}
\end{tabular}
\caption{
$\{\cS[e^{-2\pi i k/17}, e^{2\pi i k/17}](\Delta)\}_{k\ge \Z}$
for $\Delta=\Delta(0)=(0,1,0.7+0.5i)$
}
\label{Fig5}
\end{center}
\end{figure}
\end{Example}

\begin{Example} 
The following three collections of figures
(Figure \ref{Fig6}, \ref{Fig7}, \ref{Fig8}) illustrate
the family 
$\cS[e^{2\pi it}, e^{2 \pi i n t}](\Delta)$ for $n\equiv 2 \pmod 3$,
$\cS[e^{2\pi i m t}, e^{2\pi i t}](\Delta)$ for $m\equiv 2\pmod 3$
and some other types from Example \ref{ExSmn} respectively. 
We start from $\Delta=\Delta(0)=(0,1,0.7+0.5i)$.

\begin{figure}[h]
\begin{center}
\begin{tabular}{cc}
\quad
\igg{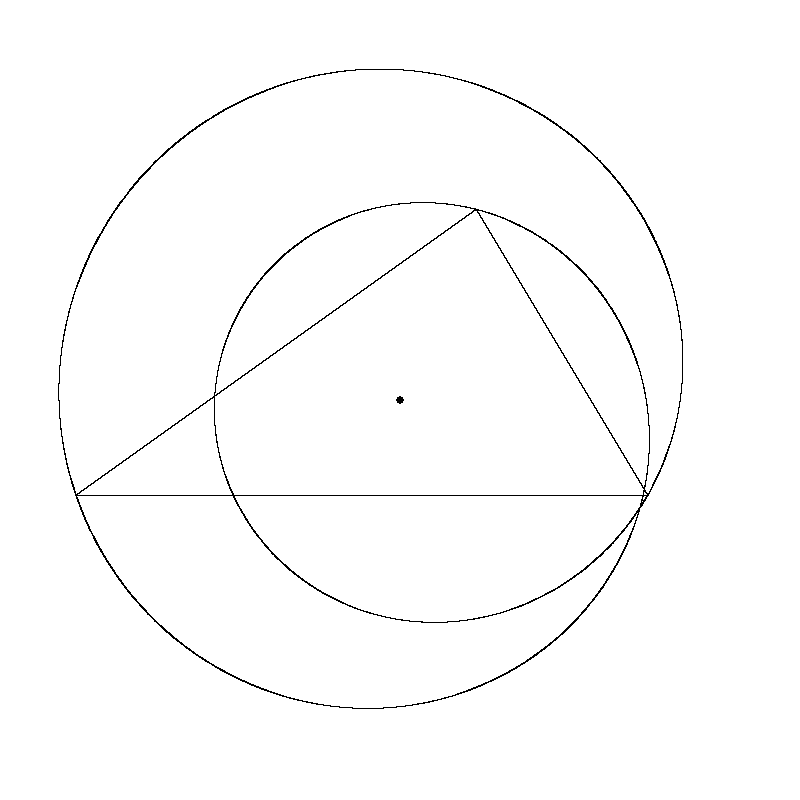} & 
\qquad 
\igg{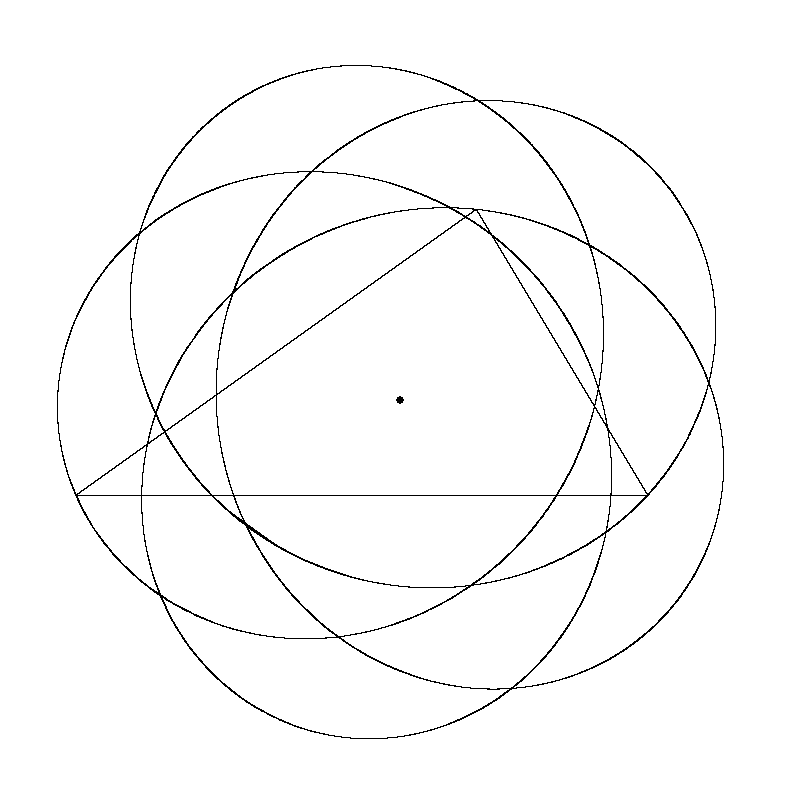}
\end{tabular}
\caption{
$\{\cS[e^{2\pi i t},e^{4\pi i t}, ](\Delta)\}_{t\in\R}$ and
$\{\cS[e^{2\pi i t},e^{-8\pi i t}](\Delta)\}_{t\in\R}$
}
\label{Fig6}
\end{center}
\end{figure}

\begin{figure}[h]
\begin{center}
\begin{tabular}{cc}
\quad
\igg{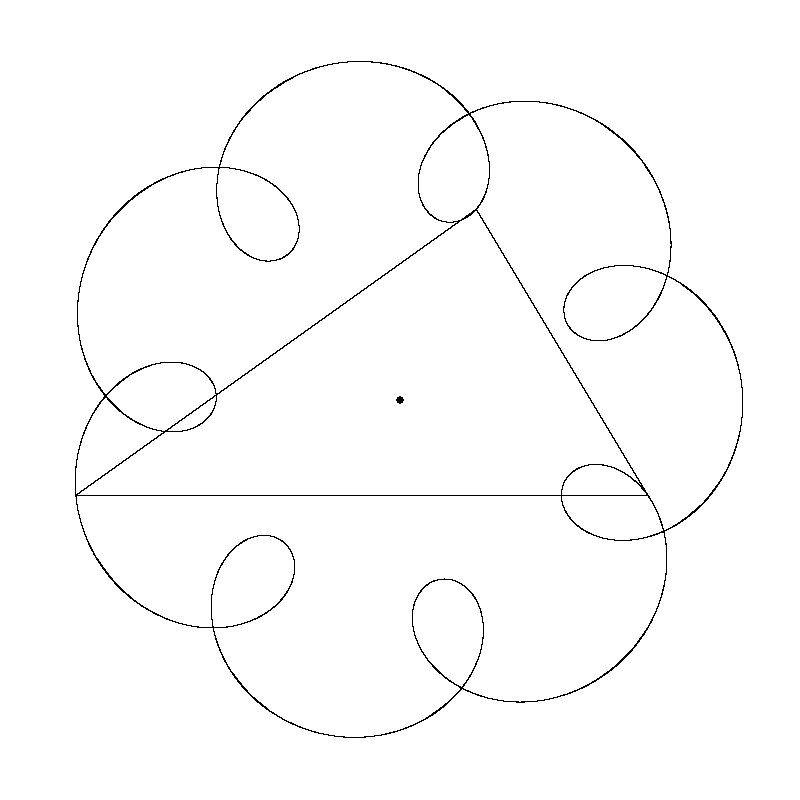} & 
\qquad 
\igg{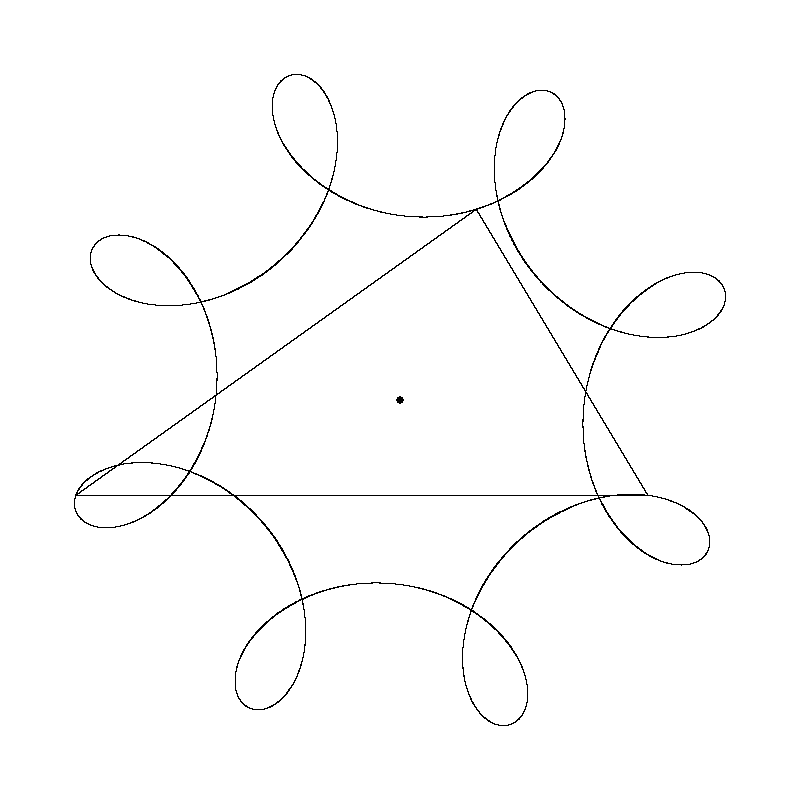}
\end{tabular}
\caption{
$\{\cS[e^{16\pi i t},e^{2\pi i t}](\Delta)\}_{t\in\R}$ and
$\{\cS[ e^{-14\pi i t},e^{2\pi i t}](\Delta)\}_{t\in\R}$
}
\label{Fig7}
\end{center}
\end{figure}

\begin{figure}[h]
\begin{center}
\begin{tabular}{cc}
\quad
\igg{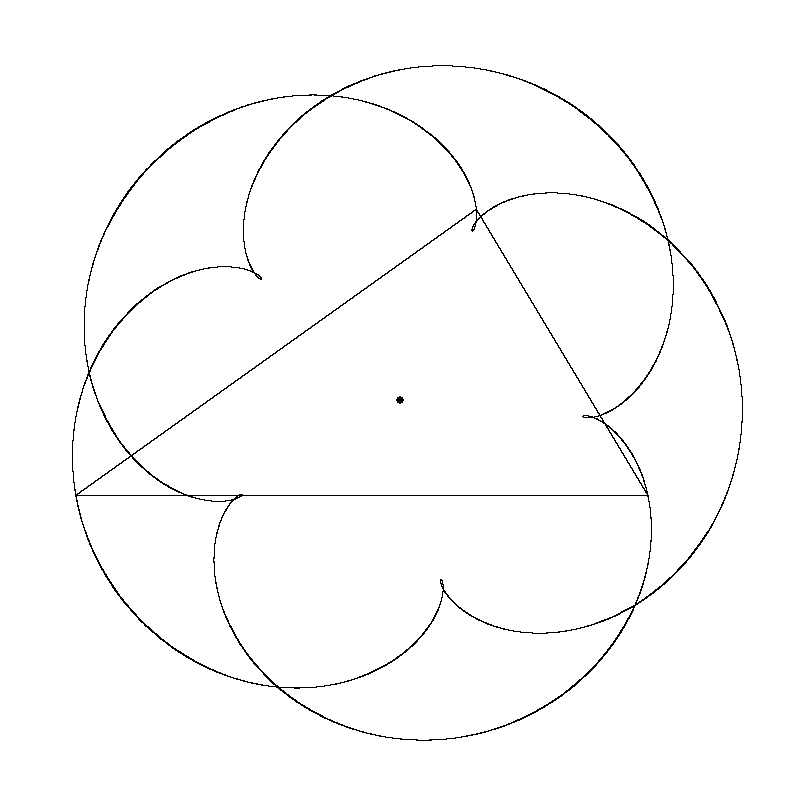} & 
\qquad 
\igg{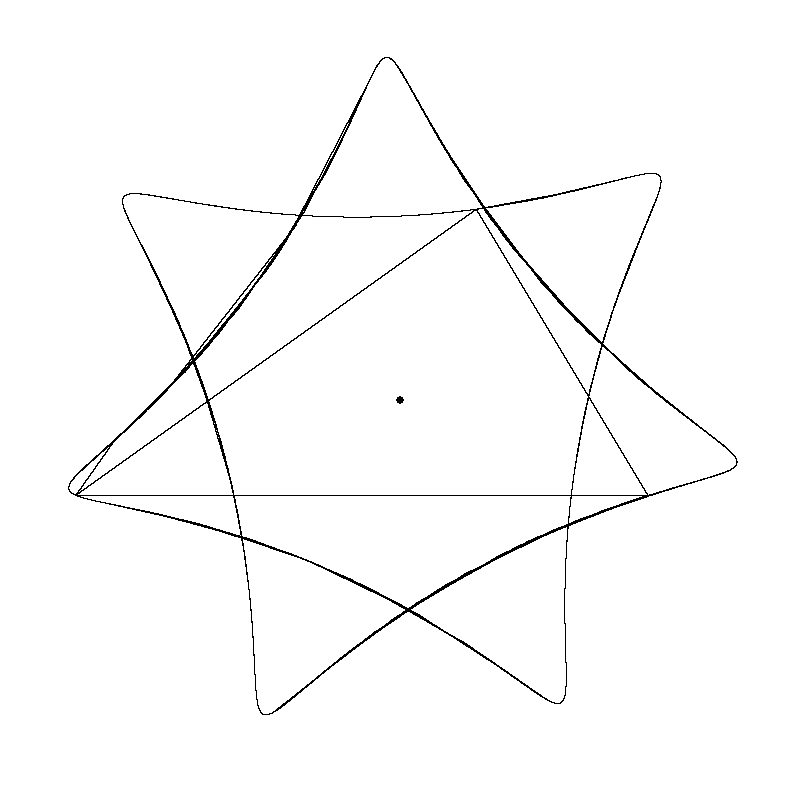}
\end{tabular}
\caption{
$\{\cS[ e^{14\pi i t},e^{4\pi i t}](\Delta)\}_{t\in\R}$ and
$\{\cS[e^{-10\pi i t},e^{4\pi i t} ](\Delta)\}_{t\in\R}$
}
\label{Fig8}
\end{center}
\end{figure}

\end{Example}

\begin{Example}[Median orbits]
By Proposition 
\ref{singleorbit} (ii), the median triangle family
$$
\Bigl\{\cM^{0\ttx/01}[\eta(t),\eta'(t)](\Delta)\Bigr\}_{t\in\R}
$$
along with 
$\eta(t)=e^{2\pi i m t}+\omega^\ttx$, 
$\eta'(t)=e^{2\pi i n t}+\omega^{-\ttx}$
($m+n\equiv 0\pmod 3$, $\ttx\in\Z/3\Z$)
has a single tracing orbit.
The following figure (Figure \ref{Fig9})
starts from $\Delta=(0,1, 0.7+0.5i)$,
$\Delta(0)=(\frac45+\frac13 i, \frac{7}{30}+\frac16 i, \frac23)$.
According to (\ref{eq2.8}), the orbit is independent of the choice of
$\ttx\in\Z/3\Z$.  
We also observe that it is similar to the orbit 
$\{\cS[e^{-10\pi i t},e^{4\pi i t} ](\Delta)\}_{n\in\R}$
illustrated in the previous example.
\begin{figure}[h]
\begin{center}
\begin{tabular}{c}
\quad
\igg{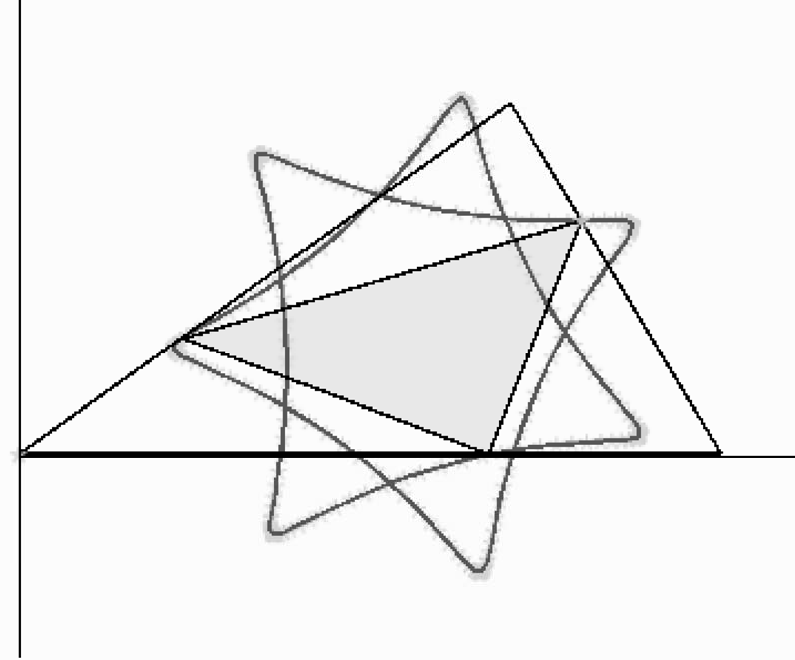}
\end{tabular}
\caption{$\Delta$ and
$\{\cM^{0\ttx/01}[e^{-10\pi i t}+\omega^\ttx,e^{4\pi i t}+\omega^{-\ttx}](\Delta)\}_{t\in\R}$
}
\label{Fig9}
\end{center}
\end{figure}
\end{Example}

It is not necessary for us to 
persist in area-preserving parameters 
in Proposition \ref{singleorbit}.

Simple linear sums of $e^{2\pi i m t}$ with 
$m\equiv \pm 1\pmod 3$ ($\pm$ depends on
$\eta,\eta'$ individually)
already provide us with a number of remarkable examples. 
In this paper, we content ourselves with showing 
the following few cases among them.

\begin{Example}[Figure eight cevian orbit]
Let $\Delta=(0,i,-i)$ be a degenerate triangle, and set
$\eta(t)=-2e^{2\pi i t}+e^{-4\pi it}$, 
$\eta'(t)=2e^{-2\pi i t}+e^{4\pi it}$.
Then, the vertices of 
$\Delta(t)=\cS[\eta(t),\eta'(t)](\Delta)$ moves on a single 
figure eight curve $X^2=\frac{3}{16}X^4+Y^2$ in XY-coordinates of $\C$:
The first vertex of $\Delta(t)$ moves along
$\frac43\sqrt{3}\cos(t)+i \frac23\sqrt{3}\sin(2t)$
$(t\in \R)$ and the other two vertices chase it on the same orbit
(Figure \ref{Fig10}).

\begin{figure}[h]
\begin{center}
\begin{tabular}{ccc}
\ig{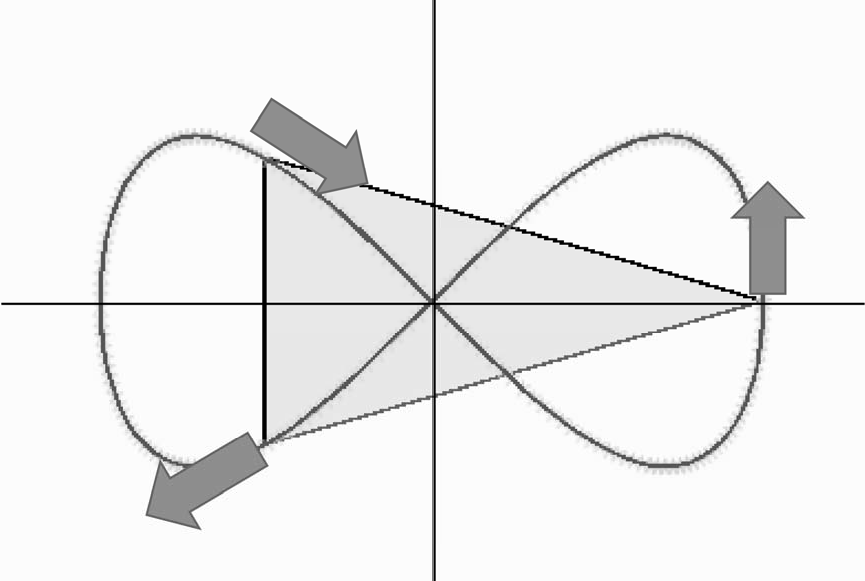} & 
$\to$ 
\ig{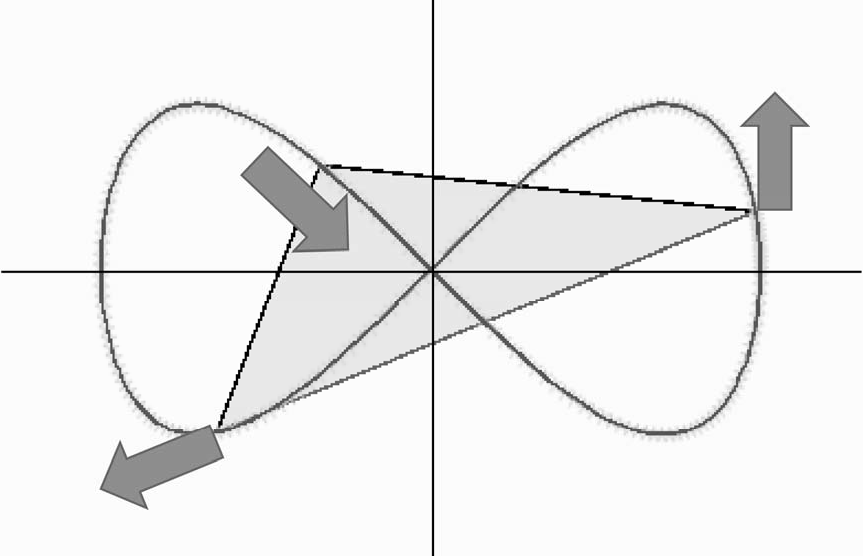} &
$\to$
\ig{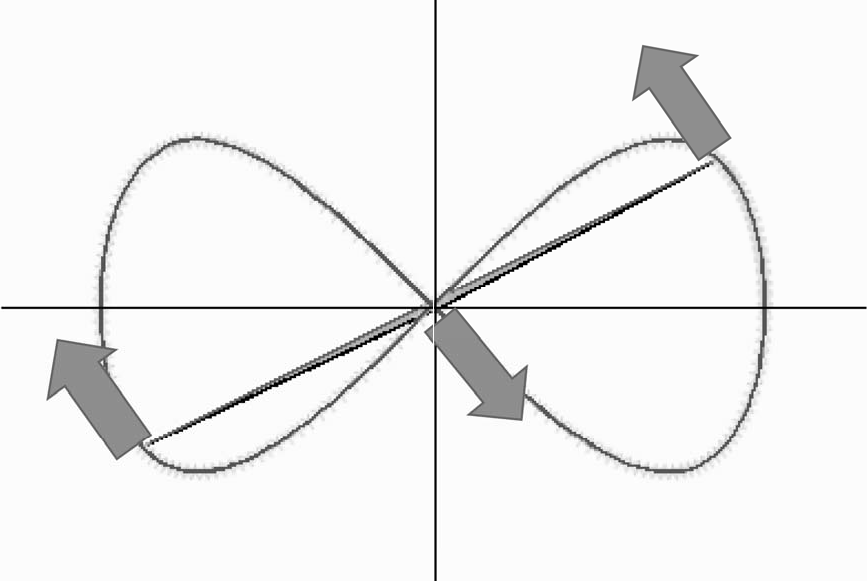} \\
$\to$
\ig{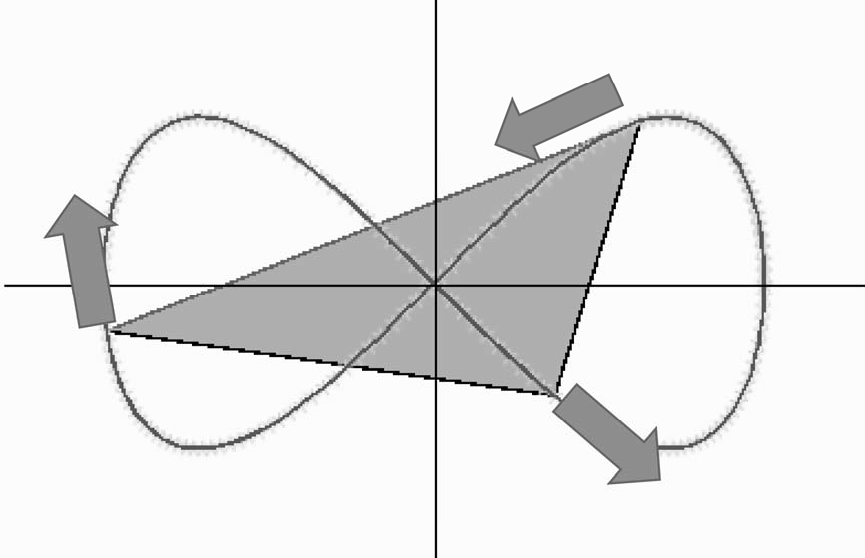} &
$\to$ 
\includegraphics[scale=0.17]{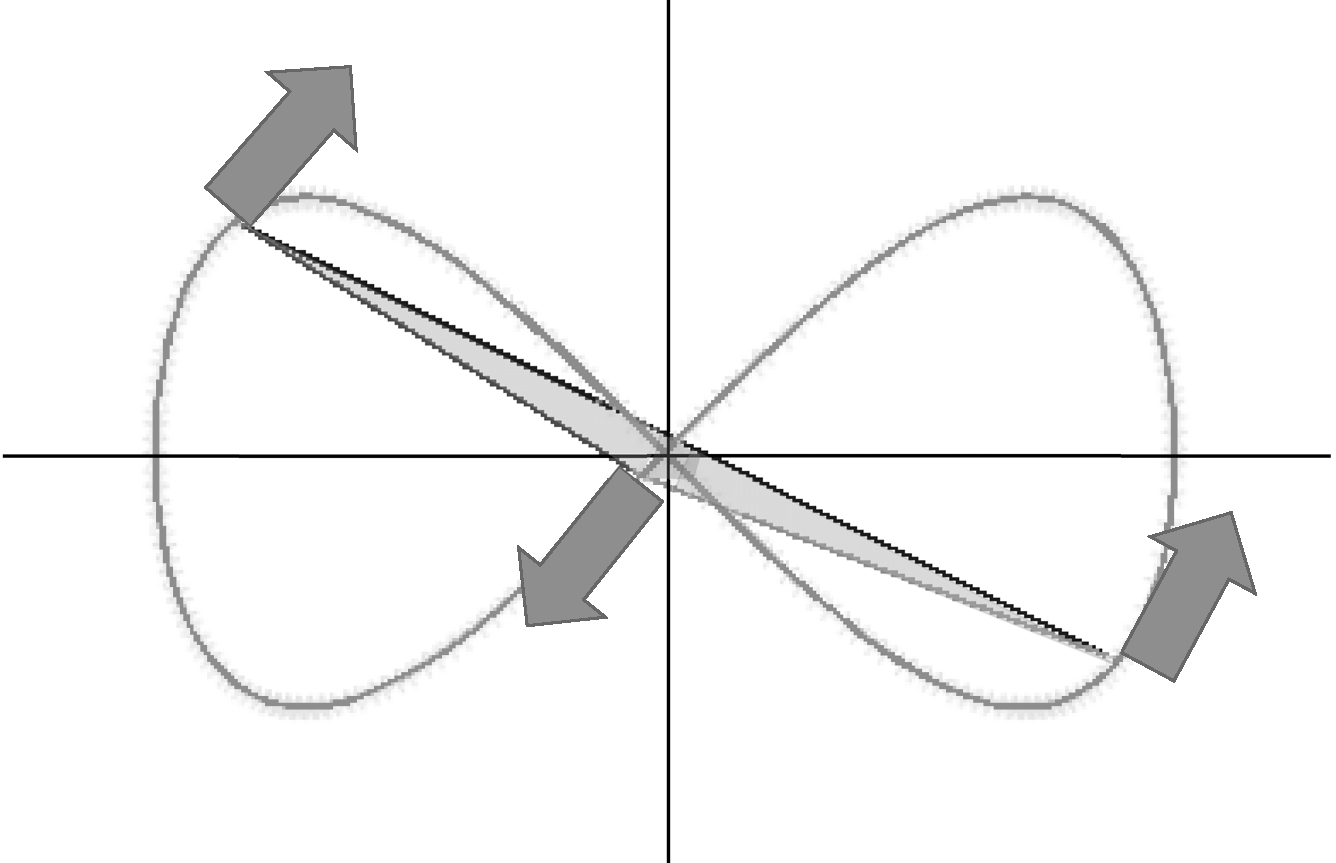} 
&
$\to$ 
\ig{greyfigure8ini.png} 
\end{tabular}
\caption{
$\bigl\{\cS[-2e^{2\pi i t}+e^{-4\pi it}, 2e^{-2\pi i t}+e^{4\pi it}](\Delta)
\bigr\}_{t\in\R}$ 
}
\label{Fig10}
\end{center}
\end{figure}
\noindent
One direction of generalizing this example is to 
consider $\Delta(t)$ whose vertices move on 
what is called a Lissajous curve.
In a separate article \cite{KNO20}, we will discuss in details 
``Lissajous 3-braids'' arising from this sort of triangle's motions.
\end{Example}

\begin{Example}[Figure eight median orbit]
We provide another example for Proposition 
\ref{singleorbit} (ii).
Starting from $\Delta=(0,4,3+i)$, the median triangle family
$\{\cM^{01/01}[\eta(t),\eta'(t)](\Delta)\}_{t\in\R}$ along 
with 
$\eta(t)=-2e^{2\pi i t}+e^{-4\pi it}+\omega$, 
$\eta'(t)=2e^{-2\pi i t}+e^{4\pi it}+\omega^2$
gives a figure eight orbit (Figure \ref{Fig11}).

\begin{figure}[h]
\centering
\begin{tabular}{cccc}
\igs{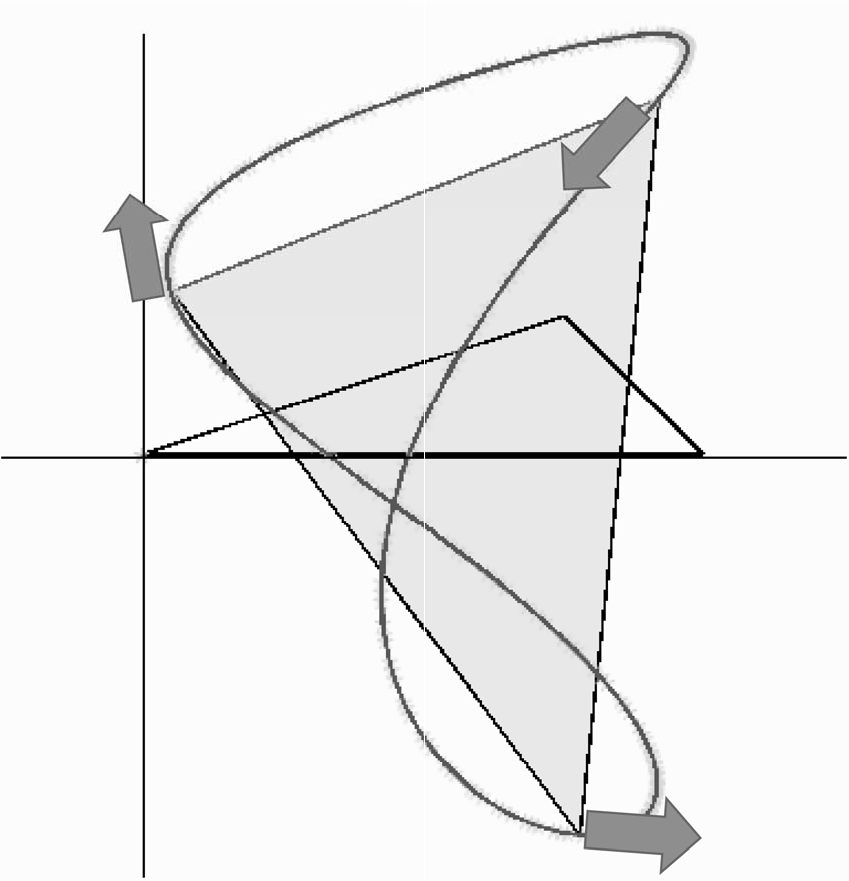} & 
$\to$ 
\igs{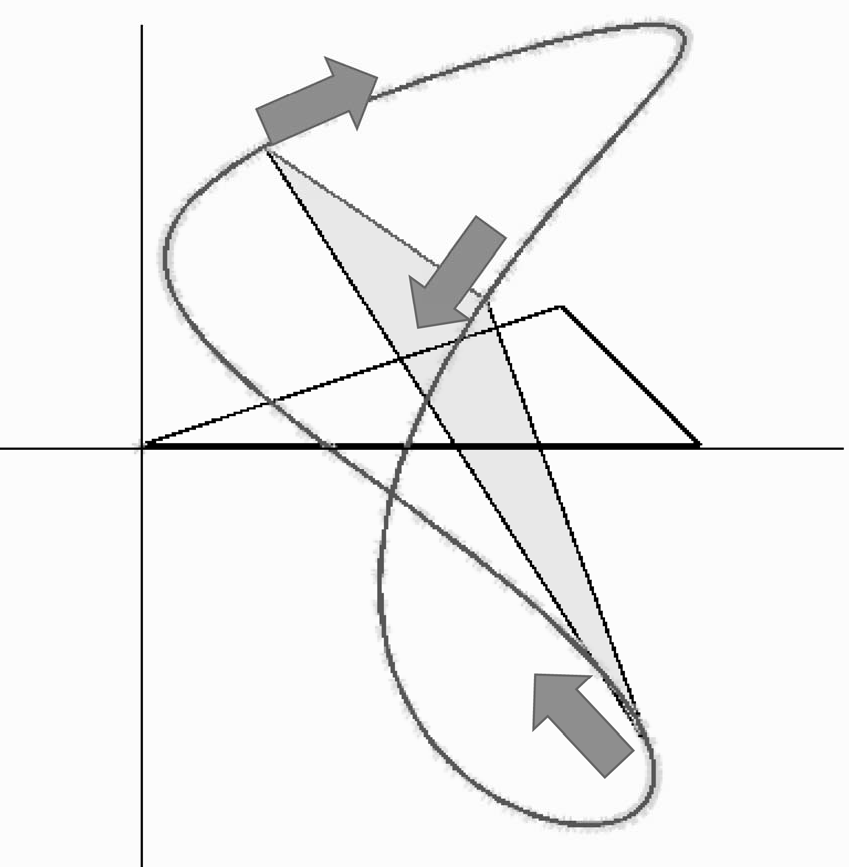} &
$\to$
\igs{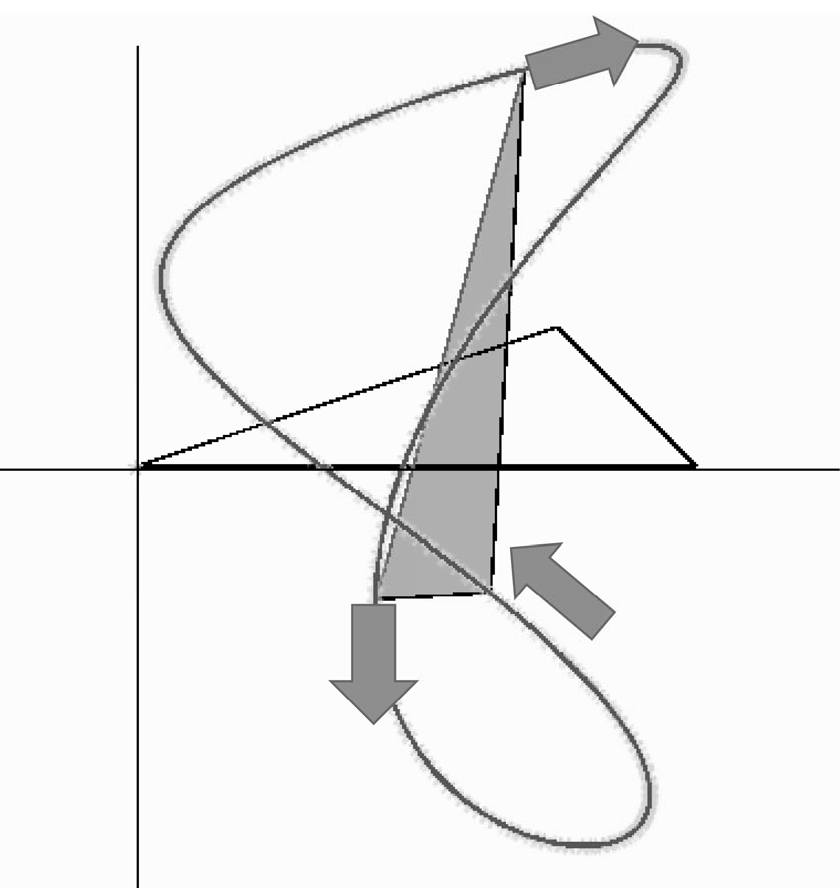} &
$\to$ 
\igs{greyMedian8ini.png}
\\
\end{tabular}
\vspace*{0.5cm} 
\caption{$\Delta$ and
$\bigl\{\cM^{01/01}[-2e^{2\pi i t}+e^{-4\pi it}+\omega,
2e^{-2\pi i t}+e^{4\pi it}+\omega^2](\Delta)
\bigr\}_{t\in\R}$ 
}
\label{Fig11}
\end{figure}
\end{Example}

\section*{Acknowledgement}
This work was supported by JSPS KAKENHI Grant Numbers JP16K13745, JP20H00115.
This is a pre-print of an article published in Results in Mathematics. The final authenticated version is available online at: https://doi.org/s00025-020-01268-3

\ifx\undefined\bysame
\newcommand{\bysame}{\leavevmode\hbox to3em{\hrulefill}\,}
\fi

\end{document}